\theoremstyle:=definition,remark,plain\do{%
        \expandafter\g@addto@macro\csname th@\theoremstyle\endcsname{%
            \addtolength\thm@preskip\parskip
            }%
        }
\newtheorem*{rep@theorem}{\rep@title}
\newcommand{\newreptheorem}[2]{%
\newenvironment{rep#1}[1]{%
 \def\rep@title{#2 \ref{##1}}%
 \begin{rep@theorem}}%
 {\end{rep@theorem}}}
\newtheorem{Definition}[equation]{Definition}
\newcounter{mark}
\newtheorem{Theorem}[equation]{Theorem}
\newtheorem{Proposition}[equation]{Proposition}
\newtheorem{Lemma}[equation]{Lemma}
\newtheorem{Corollary}[equation]{Corollary}
\newtheorem{Remark}[equation]{Remark}
\newtheorem{Shufflep}[mark]{Shuffling Process}
\newtheorem{Question}[equation]{Question}
\definecolor{darkred}{rgb}{0.7,0,0} 
\newcommand{\darkred}{\color{darkred}} 
\newcommand{\defn}[1]{\emph{\darkred #1}} 
\newcommand{\eps}{\epsilon}
\newcommand{\f}[2]{\frac{#1}{#2}}
\renewcommand{\P}{\mathcal{P}}
\newcommand{\tme}{t_{\text{mix}}(\epsilon)}
\newenvironment{itemise}{\itemize}{\enditemize}
\newcommand{\M}{\mathcal{M}}
\title{A variation of strong stationary times for random walks with partial symmetries}
\author{Graham White}
\date{\today}
\begin{document}

\maketitle

\begin{abstract}
We introduce a variation of strong stationary times for random walks on the symmetric group. Rather than proceed in the usual fashion of accumulating larger and larger blocks of cards which may be in any order, we wait for pairs of cards to `interact', and bound the mixing time by the time taken for enough interactions to occur.
\end{abstract}

\section{Introduction}
\label{sec:intro}

Strong stationary times are a powerful combinatorial technique for giving an upper bound on the mixing time of a random walk. Unfortunately, they can be very difficult to construct and often require that the random walk in question have quite a lot of symmetry. For instance, many strong stationary times for random walks on the symmetric group $S_n$ rely on looking at the positions of individual cards, and using induction to show that larger and larger sets of cards are uniformly random amongst themselves.

In this paper, we will introduce a similar technique which may be used for shuffling schemes which do not lend themselves to these kind of intermediate results. As an example, rather than inducting on larger and larger sets of random cards, we might track which pairs of cards have `interacted' with one another, in the sense that they might have swapped positions, and show that once all pairs of cards have interacted, the deck is random.

Once we have developed the necessary tools (Theorem \ref{the:genmutationfasttrans}), we show that they can easily give bounds of the correct order for several random walks of interest, including models of wash shuffles (Corollary \ref{cor:mutationfasttrans}), adjacent transpositions (Proposition \ref{prop:adjtransposition}), and a random walk on $S_n$ generated by a transposition and an $n$--cycle (Proposition \ref{prop:ncycletransposition}).

\subsection*{Acknowledgements}

I would like to thank my advisor, Persi Diaconis, for many helpful discussions and suggestions, about this work and others. 

\section{Strong stationary times}

The method of strong stationary times is a classical combinatorial technique (\cite{ADstrong}), where paths leading to various outcomes are placed in bijection with one another in order to show that these outcomes are equally likely. 

Strong stationary times have been used to give bounds on the separation distance mixing time of several random walks. A weakness of the technique seems to be that it is very difficult to use unless the state space is extremely structured, such as the symmetric group. Indeed, strong stationary times for walks on $S_n$ tend to `build up randomness', using the fact that inside $S_n$ there is an ascending chain of subgroups $$S_2 \leq S_3 \leq S_4 \leq \cdots \leq S_{n-1} \leq S_n.$$

Strong stationary times are used in \cite{Broder}, \cite{Matthews}, and \cite{GWtranspositions} to analyse the random transposition walk on the symmetric group. Chapter 4 of \cite{diaconis1988group} discusses strong stationary times further, discussing examples including the random transpositions of \cite{Broder} and \cite{Matthews}, and the top-to-random shuffle and random walk on the hypercube examples of \cite{ADstrong}. \cite{diaconis1990strong} presents examples and a construction of a dual Markov chain to a strong stationary time. 

A strong stationary time may be thought of as an exact description of a large set of paths, equally many of which end at each possible state of the chain. For example, in the random-to-top chain, where a step is to choose a random card and move it to the top of the deck, among paths of any length which choose every card at least once, equally many end at each possible order of the deck. 

Likewise in the inverse riffle shuffle chain, where a step consists of choosing a random set of cards and moving those cards to the top of the deck while preserving their relative order, paths of any given length where no two cards are moved by exactly the same set of steps are equally likely to end in any order of the deck. In contrast to the results of this paper, in each case those results completely specify the paths in question. 

While Proposition \ref{prop:mutationbound} gives bounds on the separation distance mixing time in a similar manner, it will not explicitly describe an equally-distributed set of paths in this way. In order to give such an upper bound on the mixing time, it is not necessary to exactly identify the set of paths, just to show that there are at least a certain number leading to each state.

Section \ref{sec:mutdifficulties} describes a shuffling scheme which seems as though it should be amenable to strong stationary time analysis, but where a reasonable attempt at constructing a strong stationary time fails, motivating our constructions. Section \ref{sec:mutdefinitions} gives definitions which will be used in later sections. Section \ref{sec:mutations} introduces mutation times and uses them to circumvent the issues encountered in Section \ref{sec:mutdifficulties}. Section \ref{sec:mutgeneral} uses mutation times to analyse some models of the wash shuffle, and Section \ref{sec:mutother} uses mutation times to derive upper bounds on the mixing times of classical random walks on the symmetric group $S_n$, including in cases where the construction of a strong stationary time is difficult.

Our focus is on presenting novel techniques rather than on obtaining optimal bounds. In particular, we will not be concerned with constant factors in the mixing time.

The arguments in this paper were inspired by \cite{DiaconisPal}, which introduces shuffling schemes such as Shuffle \ref{shu:1dwash1} and analyses them using similar methods. The exact relation between the techniques remains unclear. I am grateful to the authors of \cite{DiaconisPal} for discussing preliminary versions of their work with me.    

\section{Motivating example}
\label{sec:mutdifficulties}

A common method of shuffling cards is to spread them out on a table, and move them around with sweeping motions of the hands, which are in contact with multiple cards at once. This is done for a period of time, and then the cards are pushed together into one pile. As explained in \cite{DiaconisPal}, this is the standard method of shuffling in poker clubs and tournaments, and in Monte Carlo baccarat. Unlike riffle shuffles, which are well-modelled by the GSR shuffle, this `wash' shuffle is not well-understood.

Studying this shuffle via models of the following sort is due to Diaconis and Pal \cite{DiaconisPal}. The first model presented is a simple attempt at modelling the wash shuffle. It is introduced to study the essential difficulty considered in this section, and will be used in Section \ref{sec:mutations} as an initial example where a mutation time may be used to give an upper bound on the mixing time. In Section \ref{sec:mutgeneral} more sophisticated models will be considered, such as Shuffle \ref{shu:1dlongwash}.

\begin{Shufflep}
\label{shu:1dwash1}
Take a deck of $n$ cards, and place each card at one of the positions $1$ through $n$. At each step, choose a random card, and move it one step to the right, one step to the left, or leave it in place, with probabilities $\frac{1}{4}, \frac{1}{4}, $ and $\frac{1}{2}$. If a card moves to an occupied position, then it is inserted at a random position into the pile of cards already there.

After $t$ steps of this process, the piles are combined by placing the piles atop one another, with the pile from position $1$ on top.
\end{Shufflep}

This scheme is intended as the simplest possible model exhibiting the desired behaviour --- more faithful models would allow multiple cards to be moved at once, have cards be moved further than a single step, and perhaps operate in more than one dimension. We will consider some of these modifications in Section \ref{sec:mutgeneral}.

Analysis of this model is not difficult --- for instance, the position of a card depends only on how many times that card has been chosen to move, so once each card has been chosen enough times that its position is close to random, the resulting permutation will be close to random. This is because Shuffle \ref{shu:1dwash1} was defined so that when there are multiple cards at any position, those cards are equally likely to be in any order. The goal of this paper is to develop techniques applicable in situations where such simplifications are either not available, or give suboptimal bounds.

\begin{Definition}
In Shuffling Process \ref{shu:1dwash1} and similar processes in later sections, define two cards to \defn{interact} if they occupy the same position. (This is consistent with Definition \ref{def:interaction})
\end{Definition}

It is interesting to attempt to analyse Shuffle \ref{shu:1dwash1} via a strong stationary time, using the following observation. 

\begin{Remark}
\label{rem:ijswap}
Consider paths of length $t$ where cards $i$ and $j$ have interacted. Then for any permutation $\pi$, there are exactly as many of these paths ending at $\pi$ as ending at $\pi (i \; j)$ --- the permutation obtained from $\pi$ by transposing cards $i$ and $j$.
\end{Remark}
\begin{proof}
It suffices to give a bijection from paths ending at $\pi$ to paths ending at $\pi (i \; j)$. Let $\P$ be a path ending at $\pi$. Define $f(\P)$ to be the path which is the same as $\P$ until the first time the cards $i$ and $j$ are in the same position, and which from then onwards moves card $i$ whenever $\P$ would move card $j$ and vice versa. 
\end{proof}

Following Remark \ref{rem:ijswap}, one might think that the condition ``each pair of cards has interacted'' would be a strong stationary time, via repeated application of Remark \ref{rem:ijswap}, building arbitrary permutations as products of transpositions. Sadly, this is not the case. It can be checked explicitly that for $n=3$ cards and $t=3$ steps, conditioning on each pair of cards having interacted produces a nonuniform distribution on $S_3$.

The problem with this approach is that when a path is modified \`a la Remark \ref{rem:ijswap}, by interchanging the roles of two cards from some point forwards, this might destroy the property that each pair of cards has interacted. For instance if cards $i$ and $j$ met, and later $i$ and $k$ met, and finally $j$ and $k$ met, then swapping the roles of cards $i$ and $k$ from their meeting onwards would result in $i$ and $j$ meeting again rather than $j$ and $k$ meeting. The resulting path might no longer have each pair of cards interacting, which could invalidate attempts to apply Remark \ref{rem:ijswap} again.

It is possible to work around this problem, as in the following section. There will be two major differences. Bijections must be constructed not just between paths ending in $\pi$ and paths ending in $\pi (i \; j)$, but between paths ending in $\pi$ and paths ending in any other permutation $\pi'$. Also, it will no longer be the case that an exact set of equally-distributed paths is explicitly described by the stopping time in question --- rather, enough paths ending at each permutation will be shown to exist and described only implicitly.

For comparison purposes, the following remark is a version of Remark \ref{rem:ijswap} for an actual strong stationary time.

\begin{Remark}
Consider the random-to-top shuffle, where each step chooses a random card and moves it to the top of the deck. Then among paths of length $t$ which choose every card at least once, equally many paths produce each possible order of the deck.
\end{Remark}
\begin{proof}
Let $\P$ be a path which chooses each card at least once. For each permutation $\pi$, consider the path $\P_\pi$, defined by the rule that at each step, if path $\P$ moves card $i$, then the path $\P_\pi$ moves card $\pi(i)$. The collection of paths $\P_\pi$ contains one path which ends at each order of the deck.

Alternatively, rather than apply the whole permutation $\pi$ in one step, we could swap two cards, showing that any two permutations differing by a transposition are equally likely. Because this kind of modification --- moving each card instead of the other for the entire walk --- does not change whether or not each card has been moved at least once, this result may be iterated, unlike Remark \ref{rem:ijswap} for Shuffle \ref{shu:1dwash1}.  
\end{proof}

\section{Definitions}
\label{sec:mutdefinitions}
The techniques we use --- strong stationary times and variations --- give bounds on separation distance, so mixing times will be with regard to this distance.

The objects of study in this paper are shuffling processes. Rather than random walks on the symmetric group $S_n$, these are random processes in a larger state space. They are run for some number of steps and then the final state determines an element of $S_n$.

\begin{Definition}
\label{def:shufflingprocess}
A \defn{shuffling process} is a Markov chain together with a map from the state space to $S_n$.
\end{Definition}

For instance, a shuffling process might involve $n$ cards being moved around on a line or plane, and then collected in a specified order --- say, from left to right. While Definition \ref{def:shufflingprocess} does not require that the Markov chain have a finite state space, all of our examples will have finite state spaces. For examples with infinite state spaces, consider versions of Shuffles \ref{shu:1dwash1} or \ref{shu:1dlongwash} where the cards move on the set of integers rather than just the integers from $1$ to $n$.

The term `shuffling process' is reserved for the sense of Definition \ref{def:shufflingprocess}.

\begin{Definition}
The \defn{total variation distance} and \defn{separation distance} after $t$ steps of a shuffling process $\M$ have their usual meaning, with reference to the distributions on $S_n$ obtained by running $\M$ for $t$ steps and then mapping to $S_n$. The \defn{mixing time} of a shuffling process is defined as usual using either of these distances.
\end{Definition}

The motion of individual cards will not depend on their values.

\begin{Definition}
\label{def:fair}
Let $\M$ be a Markov chain on $S_n$. This chain is \defn{fair} if taking a step of $\M$ commutes with any permutation of card labels. 
\end{Definition}

For example, the operations of ``Reverse the deck'' and ``Perform a riffle shuffle'' are fair, while ``Move the ace of spades to the top'' and ``Shuffle until the bottom card is a face card'' are not. Indeed, any procedure on a deck of cards that would usually be described as shuffling is fair, because shuffling operations should not depend on the present positions of certain cards, but rather can be carried out with the cards being face down and indistinguishable to the shuffler. 

In particular, random walks on $S_n$ are fair.

\begin{Proposition}
Any random walk on $S_n$ is fair.
\end{Proposition}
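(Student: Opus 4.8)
The plan is to reduce the proposition to the single fact that left and right multiplication in a group commute, which is just associativity. I would first fix conventions compatibly with the rest of the paper, writing a state $\pi \in S_n$ so that $\pi(i)$ is the position of card $i$; this is the convention under which transposing cards $i$ and $j$ sends $\pi$ to $\pi\tp{i}{j}$, as used in Remark~\ref{rem:ijswap}. In these terms a shuffling step acts on \emph{positions}: a single step of a random walk applies a random permutation $s$ (distributed according to the driving measure $\mu$, independently of $\pi$) to the positions of all cards, sending $\pi \mapsto s\pi$, i.e.\ left multiplication by $s$.

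Next I would record what ``permuting card labels'' does. Relabelling each card $i$ as $\tau(i)$ moves no card but only renames it, so the card now called $m$ sits where card $\tau^{-1}(m)$ used to sit; hence relabelling sends $\pi \mapsto \pi\tau^{-1}$, which is right multiplication by the fixed element $\tau^{-1}$. The structural heart of the claim is now visible: a shuffle multiplies on the position side and a relabelling multiplies on the label side, and these are the two opposite sides of $S_n$.

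The computation is then immediate. For every fixed $\tau$, every state $\pi$, and every value $s$ of the random increment, associativity gives $s(\pi\tau^{-1}) = (s\pi)\tau^{-1}$, so relabelling and then stepping lands in exactly the same state as stepping and then relabelling. Because the law of $s$ does not depend on the order in which the two operations are performed, the random map ``take a step'' commutes with the deterministic map ``relabel by $\tau$'' at the level of distributions. Equivalently, the transition kernel $P(\pi,\pi') = \mu(\pi'\pi^{-1})$ is unchanged by right multiplication of both arguments, $P(\pi\tau^{-1},\pi'\tau^{-1}) = P(\pi,\pi')$, which is exactly the assertion that $\M$ is fair.

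I do not expect a genuine obstacle; the only thing requiring care is the bookkeeping of sides. One must verify that a relabelling lands on the side \emph{opposite} to the walk's multiplication, since multiplications on the same side need not commute: were a step and a relabelling both right multiplications, stepping then relabelling would give $\pi s\tau^{-1}$ while relabelling then stepping would give $\pi\tau^{-1}s$, and these agree only when $s$ and $\tau$ commute. Thus the real content of the proposition is that the steps of a random walk are label-blind --- they rearrange cards by position alone --- which is precisely what forces the relabelling onto the opposite side and makes fairness a one-line consequence of associativity.
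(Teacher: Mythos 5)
Your proof is correct and takes essentially the same approach as the paper: a step is left multiplication by the random increment, relabelling is right multiplication, and the two commute by associativity. You simply spell out the convention-checking that the paper's three-line proof leaves implicit.
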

\begin{proof}
Taking a step of a random walk is left-multiplication by a randomly chosen permutation. Permuting the card labels is right-multiplication by a permutation. These two operations commute.
\end{proof}

A fair shuffling process is defined in the natural way.

\begin{Definition}
\label{def:fair2}
Let $\M$ be a shuffling process equipped with an action of $S_n$ on its state space. This process is \defn{fair} if taking a step of $\M$ commutes with the $S_n$--action. 
\end{Definition}

In our settings, Definition \ref{def:fair2} will be applied to processes involving $n$ cards labelled $1$ to $n$, and the $S_n$--action will always be permutation of these labels. All processes considered will be fair --- this condition is used in Theorems \ref{the:genmutationslowtrans} and \ref{the:genmutationfasttrans}.

\begin{Definition}
\label{def:interaction}
Let $\M$ be a fair Markov chain or shuffling process with an action of $S_n$ as in Definition \ref{def:fair2}. Let $x$ and $y$ be states of $\M$, and $i$ and $j$ be integers between $1$ and $n$. Say that $i$ and $j$ \defn{interact at $y$} if $y$ is fixed by the action of the transposition $(i \; j)$. Say that $i$ and $j$ \defn{interact between $x$ and $y$} if the probabilities $\M(x,y)$ and $\M(x,y\cdot(i \; j))$ are equal.
\end{Definition}

It will be necessary to discuss the time at which interactions take place.

\begin{Definition}
\label{def:interactiontime}
Continuing from Definition \ref{def:interaction}, let the state at each time $t$ be $X_t$. Say that $i$ and $j$ \defn{interact at $t$} if they interact at $X_t$, and that they \defn{interact between $t$ and $t+1$} if they interact between $X_t$ and $X_{t+1}$.
\end{Definition}

Definition \ref{def:interaction} is motivated by shuffling processes such as Shuffles \ref{shu:1dwash1}, \ref{shu:1dlongwash} and \ref{shu:ddwash1}, which are described later. In those processes, interaction corresponds to the two cards in question moving to the same location. Notice that the second part of Definition \ref{def:interaction} generalises the first --- if $i$ and $j$ interact at $y$, then they interact between $x$ and $y$ for any $x$. 

The key property of these definitions is that for any path in a fair Markov chain or shuffling process $\M$, if $i$ and $j$ interact at time $t$, then the path may be modified by exchanging $i$ and $j$ at all times larger than $t$, and the resulting path is equally probable. The same is true if $i$ and $j$ interact between times $t$ and $t+1$, which is the motivation for the second part of Definition \ref{def:interaction}.

It will be common for us to show that a set of cards are equally likely to be in any order amongst themselves, and that this is independent of the positions of other cards.

\begin{Definition}
\label{def:jumbled}
Given a distribution on $S_n$, a set of labels $A$ is \defn{jumbled} if the distribution is unchanged under permuting the cards of those labels. 
\end{Definition}

Note that this definition requires not only that the set of cards $A$ are equally likely to be in any order amongst themselves, but also that this is independent of the positions of all other cards. 

\section{Mutation times}
\label{sec:mutations}

This section introduces mutation times and shows how they may be used to give upper bounds on the separation distance mixing time. Shuffle \ref{shu:1dwash1} is used as a running example, with two different mutation times given in Propositions \ref{prop:mutationslowtrans} and \ref{prop:mutationfasttrans}. These mutation times are generalised to other shuffling processes in Section \ref{sec:mutgeneral}.

\begin{Definition}
\label{def:mutationtime}
Consider a Markov chain on the symmetric group $S_n$. A stopping time $T$ is a \defn{mutation time} if for each initial state $x_0$ and each pair of permutations $\pi$ and $\pi'$, there is an injection from paths from $x_0$ of length $t$ satisfying $T$ and ending in $\pi$ to paths from $x_0$ of length $t$ ending in $\pi'$ (these paths need not satisfy $T$).
\end{Definition}

Mutation times give upper bounds on mixing in much the same way as strong stationary times do, except that rather than explicitly identifying a large multiple of the uniform distribution, they just show its existence.

\begin{Proposition}
\label{prop:mutationbound}
For a Markov chain on $S_n$ with uniform stationary distribution, if $T$ is a mutation time with $P(T > t) < \eps$ then $\tme \leq t$.
\end{Proposition}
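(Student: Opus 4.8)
The plan is to unwind the definition of separation distance and show that the mutation-time injections force a uniform lower bound on the $t$-step transition probabilities from any starting state. Write $P^t(x_0,\pi)$ for the probability that the chain, started at $x_0$, is at $\pi$ after $t$ steps, and recall that the separation distance from $x_0$ after $t$ steps is $s_{x_0}(t) = \max_\pi\left(1 - \f{P^t(x_0,\pi)}{\mu(\pi)}\right)$, where $\mu$ is the uniform stationary distribution, so $\mu(\pi) = 1/n!$. To conclude $\tme \leq t$ it suffices to show $s_{x_0}(t) < \eps$ for every starting state $x_0$, which is equivalent to the bound $P^t(x_0,\pi) \geq (1-\eps)\,\mu(\pi)$ for all $\pi$.

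First I would fix $x_0$ and, for each permutation $\sigma$, let $q(\sigma)$ denote the total probability carried by length-$t$ paths from $x_0$ that satisfy $T$ (that is, have $T \leq t$) and end at $\sigma$. Since every such path ends at exactly one permutation, summing over $\sigma$ gives $\sum_\sigma q(\sigma) = \pr{T \leq t}$. Now fix the target permutation $\pi$ whose probability I wish to bound from below. For each $\sigma$, Definition \ref{def:mutationtime} (applied with source permutation $\sigma$ and target $\pi$) supplies an injection from the length-$t$ paths satisfying $T$ and ending at $\sigma$ into the set of all length-$t$ paths from $x_0$ ending at $\pi$. The essential point is that this injection preserves the probability of each path: as in Remark \ref{rem:ijswap}, it is built by relabelling cards after an interaction, and fairness (Definition \ref{def:fair2}) guarantees that the relabelled path is exactly as likely as the original. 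Consequently $q(\sigma) \leq P^t(x_0,\pi)$ for every $\sigma$.

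Summing this inequality over all $n!$ permutations $\sigma$ yields
\[
\pr{T \leq t} = \sum_\sigma q(\sigma) \leq n!\, P^t(x_0,\pi),
\]
so that $P^t(x_0,\pi) \geq \f{\pr{T \leq t}}{n!} = \pr{T \leq t}\,\mu(\pi)$. Using $\pr{T \leq t} = 1 - \pr{T > t} > 1 - \eps$ and dividing by $\mu(\pi)$ gives $1 - P^t(x_0,\pi)/\mu(\pi) < \eps$ for every $\pi$, hence $s_{x_0}(t) < \eps$. As $x_0$ was arbitrary, this establishes $\tme \leq t$.

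I expect the main obstacle to be the step asserting that the injection is probability-preserving, which is what converts the purely combinatorial injection of Definition \ref{def:mutationtime} into the probability inequality $q(\sigma) \leq P^t(x_0,\pi)$. For chains in which all length-$t$ paths are equally likely this is automatic, but for processes such as Shuffle \ref{shu:1dwash1}, whose steps carry unequal weights $\f14, \f14, \f12$, it is genuine content and relies on the relabelling construction together with the fairness hypothesis. Once probability-preservation is in hand, the remainder is the elementary summation above.
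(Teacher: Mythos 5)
Your proof is correct and follows essentially the same route as the paper's: the paper pigeonholes a single source permutation $\sigma$ with mass at least $\frac{1-\eps}{n!}$ among paths satisfying $T$ and injects from it into paths ending at each target, whereas you inject from every $\sigma$ and sum, but the substance is identical. Your explicit observation that the injection must be probability-preserving --- which Definition \ref{def:mutationtime} does not literally require, but which holds for the constructions in the paper via fairness --- makes precise a point the paper's proof leaves implicit.
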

\begin{proof}
Consider all paths of length $t$ satisfying $T$. These paths have total probability at least $1 - \eps$. Therefore there is a permutation $\pi$ such that the probability of a path of length $t$ both satisfying $T$ and ending at $\pi$ is at least $\frac{1 - \eps}{n!}$. 

Because $T$ is a mutation time, for each permutation $\pi'$, there is an injection from these paths to paths of length $t$ which end at $\pi'$ (but which do not necessarily satisfy $T$). Therefore among paths of length $t$, at least a proportion $\frac{1-\eps}{n!}$ end at $\pi'$, for each permutation $\pi'$. Hence after $t$ steps, the separation distance from the uniform distribution is at most $\epsilon$, as claimed.
\end{proof}

If Definition \ref{def:mutationtime} were only required for certain starting states, then Proposition \ref{prop:mutationbound} would give bounds on the mixing of a chain started from any of those states.

An initial example of a mutation time is as follows. It is not intended to be optimal, but rather to illustrate the principle. Proposition \ref{prop:mutationfasttrans} defines an improved mutation time. 

\begin{Proposition}
\label{prop:mutationslowtrans}
As a deck of $n$ cards is shuffled via Shuffling Process \ref{shu:1dwash1}, define a stopping time as follows. Let $T_0 = 0$, and for each $i$, let $T_i$ be the earliest time such that between times $T_{i-1}$ and $T_i$, card $i$ has interacted with each other card. Then $T_n$ is a mutation time.
\end{Proposition}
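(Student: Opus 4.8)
The plan is to verify the definition of a mutation time (Definition \ref{def:mutationtime}) directly: fixing an initial state $x_0$ and permutations $\pi,\pi'$, I would construct an injection from length-$t$ paths that satisfy $T_n$ and end at $\pi$ to length-$t$ paths that end at $\pi'$. The only tool needed is the role-swap property underlying Definition \ref{def:interaction} and Remark \ref{rem:ijswap}: if cards $i$ and $j$ interact at a time $s$, then exchanging the roles of $i$ and $j$ at all times after $s$ produces an equally probable path whose final permutation is right-multiplied by $(i\;j)$. Hence it suffices to realise the permutation $\sigma=\pi^{-1}\pi'$ as a product of transpositions, each performed at an interaction, which compose to $\sigma$ on the final state.

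First I would record the algebraic decomposition. By selection sort, every $\sigma\in S_n$ can be written as $\sigma=\rho_n\rho_{n-1}\cdots\rho_1$, where each $\rho_i$ is either the identity or a transposition $(i\;c_i)$ with $c_i<i$: take $\rho_n=(n\;\sigma(n))$ to send $n$ to its correct image, then recurse on $\rho_n\sigma$, which now fixes $n$. The virtue of this decomposition is that the $i$-th factor moves only the symbol $i$, which is exactly the card that interacts with everything during phase $i$, the interval $(T_{i-1},T_i]$. Since card $i$ interacts with every other card in this phase, in particular with $c_i$, there is a time $s_i\in(T_{i-1},T_i]$ at which $i$ and $c_i$ interact, and the role-swap at $s_i$ realises $\rho_i$.

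I would then apply these swaps phase by phase in order of decreasing phase index, starting from phase $n$ (the latest in time) and finishing with phase $1$. A short computation with two swaps shows that performing role-swaps at strictly decreasing times $s_n>\cdots>s_1$, carrying transpositions $\rho_n,\dots,\rho_1$, multiplies the final permutation on the right by $\rho_n\rho_{n-1}\cdots\rho_1=\sigma$, so the resulting path ends at $\pi\sigma=\pi'$. The main obstacle — and the reason this is not a triviality — is interference: modifying a path from some time onward can destroy later interactions, which is precisely why the naive iteration of Remark \ref{rem:ijswap} fails. Two features defeat it here. Processing the phases from latest to earliest ensures that when I reach phase $i$, every previously performed swap acted at a time larger than $T_i$ and so left the segment $(T_{i-1},T_i]$ untouched; thus the interaction of $i$ and $c_i$ at $s_i$ survives verbatim. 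Moreover the defining condition of a phase is that card $i$ meets \emph{every} other card, a condition insensitive to relabelling: later swaps permute the labels of the cards $i$ meets, but never change the fact that it meets all of them. It is this completeness, absent in the motivating example, that makes the scheme robust.

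Finally I would check injectivity by reconstructing the source path from its image. Because $\sigma$, and hence the constants $c_i$ and the transpositions $\rho_i$, depend only on $\pi$ and $\pi'$ and not on the path, the image determines the source once the times $s_i$ are recovered. Since phase $1$ begins at time $0$, the time $s_1$ is simply the first interaction of cards $1$ and $c_1$, and this first interaction is unchanged by every swap (all act strictly later), so $s_1$ is read directly off the image; undoing that swap exposes the genuine phase-$1$ segment, which determines $T_1$, hence $s_2$ as the first interaction of $2$ and $c_2$ after $T_1$, and so on. Peeling the swaps off from earliest to latest recovers the original path, so the map is injective, and $T_n$ is a mutation time. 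I expect the verification of the composition formula and the bookkeeping in this reconstruction to be the only genuinely delicate parts.
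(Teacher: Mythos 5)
Your proof is correct, and it follows the same basic architecture as the paper's: one transposition per phase, involving card $i$ and realised by a role-swap at the first interaction of $i$ with its partner inside $(T_{i-1},T_i]$, with injectivity established by an explicit deterministic inverse that re-locates the swap times one phase at a time. Two details differ. First, you use the ordered selection-sort factorisation $\sigma=\rho_n\cdots\rho_1$ with $\rho_i=(i\;c_i)$, $c_i<i$, whereas the paper's Lemma \ref{lem:expression1} allows arbitrary $a_i$ and is deliberately loose about the order of the factors; your version forces you to track the order of composition explicitly, which you do correctly. Second, and more substantively, you apply the swaps in decreasing time order (phase $n$ first), so that each swap is performed on a still-unmodified prefix of the path and the needed interaction of $i$ and $c_i$ at $s_i$ is present verbatim; the paper applies them in increasing time order and must instead warn that ``swapping two cards changes which cards interact later in the path,'' leaning on the fact that the phase condition (card $i$ meets everyone) is invariant under relabelling. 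Your ordering imports into this proposition the work-backwards-from-the-end device that the paper only deploys later, in the proof of Proposition \ref{prop:mutationfasttrans}; it makes the forward map cleaner at the small price of having to check that role-swaps at decreasing times compose to multiplication by $\rho_n\rho_{n-1}\cdots\rho_1$ rather than by its reverse (and even an error of convention there would be harmless, since one only needs an injection into paths ending at each target permutation). Both inverses run forwards in time and are essentially identical, so the argument stands.
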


The following lemma will be necessary.

\begin{Lemma}
\label{lem:expression1}
Any element $\pi$ of $S_n$ has an expression of the form $$\pi = \prod_{i=1}^n (i \; a_i),$$ where each $a_i$ is between $1$ and $n$. 
(If $a_i$ is taken to be between $1$ and $i$ then these expressions are unique, and this is the standard algorithm for generating a uniform random permutation. The weaker result is used for simplicity of the example.) 
\end{Lemma}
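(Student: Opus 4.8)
The plan is to prove the statement by induction on $n$, obtaining along the way the stronger unique factorization with $1 \le a_i \le i$ (the parenthetical remark), from which the stated existence result with $a_i \in \{1, \ldots, n\}$ is immediate. Throughout I compose permutations as functions, so that in a product the rightmost factor acts first; with the opposite convention the roles of $\pi$ and $\pi^{-1}$ below are simply interchanged.

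For existence, given $\pi \in S_n$ I would peel the transpositions off from the right. Set $a_n = \pi^{-1}(n)$, which lies in $\{1, \ldots, n\}$, and put $\sigma = \pi\,(n\; a_n)$. Since transpositions are involutions this gives $\pi = \sigma\,(n\; a_n)$, and a direct check shows $\sigma(n) = \pi((n\; a_n)(n)) = \pi(a_n) = n$, so $\sigma$ fixes $n$ and restricts to a permutation of $\{1, \ldots, n-1\}$. By the inductive hypothesis applied to this restriction, $\sigma = (1\; a_1)\cdots(n-1\; a_{n-1})$ with $1 \le a_i \le i$ for each $i \le n-1$ (reading $(i\; i)$ as the identity). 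Substituting yields $\pi = (1\; a_1)\cdots(n-1\; a_{n-1})(n\; a_n)$ with $1 \le a_i \le i$ throughout; the base case $n = 1$ is trivial. The weaker bound $1 \le a_i \le n$ asked for in the statement follows at once.

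For uniqueness under the constraint $1 \le a_i \le i$, I would argue by counting. The number of tuples $(a_1, \ldots, a_n)$ with $1 \le a_i \le i$ is exactly $1 \cdot 2 \cdots n = n!$, and the map sending such a tuple to the product $\prod_{i=1}^n (i\; a_i)$ lands in $S_n$, which also has $n!$ elements. The existence argument shows this map is surjective, and a surjection between finite sets of equal cardinality is a bijection; hence it is injective, giving uniqueness. This bijection is exactly the correctness of the Fisher--Yates shuffle: choosing each $a_i$ uniformly and independently from $\{1, \ldots, i\}$ produces each permutation precisely once, and so uniformly.

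There is no serious obstacle here, as this is a classical fact; the only point requiring care is the bookkeeping with the composition convention and the verification that the peeled factor $(n\; a_n)$ leaves behind a genuine permutation of $\{1, \ldots, n-1\}$, that is, that $\sigma$ fixes $n$. This is precisely what allows the induction to descend to $S_{n-1}$, and it is the reason the specific choice $a_n = \pi^{-1}(n)$ (rather than an arbitrary value) is forced.
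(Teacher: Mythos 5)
Your proof is correct and follows essentially the same route as the paper's: induction on $n$, with the final factor $(n\;a_n)$ determining $\pi^{-1}(n)$, which is exactly the paper's observation that each choice of $a_n$ selects one of the $n$ cosets of $S_{n-1}$ in $S_n$. You are somewhat more explicit, and you also verify the parenthetical uniqueness claim via a counting argument, but the underlying idea is identical.
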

\begin{proof}
Assume that this result is true for $n-1$, and consider expressions of the form $$\prod_{i=1}^n (i \; a_i).$$ As $a_1$ through $a_{n-1}$ range between $1$ and $n-1$, the product of the first $n-1$ terms ranges over all elements of $S_{n-1}$, and then each choice of $a_n$ produces one of the $n$ cosets of $S_{n-1}$ in $S_n$.

The lemma is true for $n=2$, completing the proof.  
\end{proof}

\begin{proof}[Proof of Proposition \ref{prop:mutationslowtrans}]
To show that $T_n$ is a mutation time, it is required to, for each pair of permutations $\pi$ and $\pi'$, define a map from paths of length $t$ which end in $\pi$ and satisfy $T_n$ to paths of length $t$ which end in $\pi'$.

The basic operation used to define these maps will be to look at an instance of cards $i$ and $j$ interacting, in the sense of Definition \ref{def:interaction}, and to, at that time, swap them. This does not involve exchanging their current positions --- the two cards are presently in the same place --- but rather, to exchange their roles in the remaining part of the path. That is, to alter the path by decreeing that after this time, card $i$ will move in place of card $j$, and vice versa. This new path will have the same probability as long as the shuffling process in question is fair, which Shuffle \ref{shu:1dwash1} is.

Given a path that ends at $\pi$, to have it instead end at $\pi'$ requires that the permutation $\pi'\pi^{-1}$ be applied to the labels. From Lemma \ref{lem:expression1}, if one is allowed to swap card $1$ with any card, and then card $2$ with any card, and so on, then it is possible to effect any desired permutation. The stopping time $T_n$ is designed to allow exactly this.

Thus, for fixed $\pi$ and $\pi'$, the desired function from paths ending at $\pi$ to paths ending at $\pi'$ is defined as follows.
\begin{itemise}
\item Consider a path ending at $\pi$ which satisfies $T_n$.
\item Let $a_1$ through $a_n$ be the constants used to write the permutation $\pi'\pi^{-1}$ as in Lemma \ref{lem:expression1}.
\item For each $i$, take the first time between $T_{i-1}$ and $T_i$ when cards $i$ and $a_i$ interact. At that time, swap those cards.
\item The resulting path ends at $\pi'$.
\end{itemise}

All that remains is to check that this map is an injection. The following procedure is a well-defined inverse. Note that the constants $a_1$ through $a_n$ depend only on the permutations $\pi$ and $\pi'$, and not on the path. Care is needed because swapping two cards changes which cards interact later in the path, and so the times $T_i$ cannot be read directly from the modified path.

Set $T_0 = 0$. For each $i$ from $1$ to $n$, wait until $i$ interacts with $a_i$ after time $T_{i-1}$. At this time, swap those two cards. Then let $T_i$ be the time at which card $i$ has interacted with each other card after time $T_{i-1}$.

This completes the proof.
\end{proof}

\begin{Corollary}
\label{cor:mutationslowtrans}
The mixing time $\tme$ of Shuffle \ref{shu:1dwash1} is at most $O(n^4)$.
\end{Corollary}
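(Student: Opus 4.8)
The plan is to combine Proposition \ref{prop:mutationbound} with Proposition \ref{prop:mutationslowtrans} by showing that the stopping time $T_n$ satisfies $P(T_n > t) < \eps$ once $t$ is of order $n^4$. Since $T_n$ is a mutation time, any such tail bound immediately yields $\tme \leq t$, so the entire task reduces to estimating $\expect{T_n}$ (and controlling its tail). I would write $T_n = \sum_{i=1}^n (T_i - T_{i-1})$ and bound each increment separately, since the definition of $T_i$ depends only on interactions occurring after $T_{i-1}$.

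First I would analyse a single increment $T_i - T_{i-1}$, which is the time for a fixed card $i$ to interact with each of the other $n-1$ cards. In Shuffle \ref{shu:1dwash1}, card $i$ interacts with card $j$ precisely when they occupy the same position, and each card performs an independent lazy random walk on $\{1,\dots,n\}$ (driven by the steps that select it). The key estimate is the expected time for two such lazy walkers on a segment of length $n$ to collide: two independent walks give a difference process that is itself a random walk on a domain of size $O(n)$, whose expected hitting time of $0$ is of order $n^2$. Since card $i$ is chosen with probability $\tfrac{1}{n}$ at each step, and only the chosen card moves, the time measured in steps of the whole process inflates this by a factor of $n$, giving an expected meeting time of order $n^3$ for a single pair. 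Collecting all $n-1$ partners of card $i$ (for instance by a union bound over coupon-collector-style tail estimates, or by bounding the expected time to meet the last remaining partner) costs at most a logarithmic or constant-order factor, so $\expect{T_i - T_{i-1}} = O(n^3)$.

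Summing over the $n$ choices of $i$ then gives $\expect{T_n} = O(n^4)$. To pass from the expectation to the tail bound $P(T_n > t) < \eps$ required by Proposition \ref{prop:mutationbound}, I would use Markov's inequality on $T_n$, or more carefully establish concentration by showing each increment has exponentially decaying tails (restarting the collision clock after each failure gives a geometric-type bound). Taking $t = C n^4$ for a suitable constant $C = C(\eps)$ then yields $P(T_n > t) < \eps$, and Proposition \ref{prop:mutationbound} gives $\tme \leq C n^4 = O(n^4)$.

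\emph{Main obstacle.} The hard part will be the single-pair meeting-time estimate and, more delicately, the fact that the walks are confined to the finite segment $\{1,\dots,n\}$ with reflecting-type behaviour at the ends rather than moving freely on $\mathbb{Z}$. One must check that this confinement does not inflate the order of the hitting time, and that the randomisation upon landing on an occupied position does not interfere with the meeting event (interaction is defined by shared position, so it only helps). Care is also needed because after $T_{i-1}$ the positions of the cards are not in a clean starting configuration, but since the meeting-time bound of order $n^3$ holds uniformly over all starting positions on the segment, this dependence on the (arbitrary) configuration at time $T_{i-1}$ is harmless.
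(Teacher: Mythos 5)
Your overall architecture (Proposition \ref{prop:mutationbound} plus a tail bound on $T_n$, decomposed as $\sum_{i}(T_i - T_{i-1})$) matches the paper's, and your single-pair estimate --- the difference process is a walk on a segment of length $O(n)$, expected hitting time of $0$ is $O(n^2)$ pair-moves, hence $O(n^3)$ steps of the full process --- is sound. The gap is in passing from one pair to all $n-1$ partners of card $i$. A union bound or coupon-collector tail estimate over the $n-1$ pairs costs a factor of $\log n$, giving $\expect{T_i - T_{i-1}} = O(n^3\log n)$ and hence $T_n = O(n^4\log n)$, which is not the claimed $O(n^4)$. Your sentence asserting that the collection step costs ``at most a logarithmic or constant-order factor, so $\expect{T_i - T_{i-1}} = O(n^3)$'' is internally inconsistent: with the logarithmic factor the conclusion fails, and the constant-order alternative is exactly the point that needs an argument.

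The paper closes this gap with a deterministic observation that collapses the $n-1$ meeting events into a single hitting event for card $i$ alone: since only one card moves per step and it moves by one position, the signed difference between the positions of $i$ and $j$ changes by at most one per step, so if card $i$ visits both position $1$ and position $n$ it must at some moment occupy the same position as every other card. The expected time for a single lazy walker on $\{1,\dots,n\}$ to visit both endpoints is $O(n^2)$ of its own moves, i.e.\ $O(n^3)$ steps of the shuffle, with no union bound and no logarithm. Substituting that observation for your collection step repairs the proof; the rest of your plan (Markov's inequality on $T_n$, uniformity of the hitting-time bound over the configuration at time $T_{i-1}$, the harmlessness of the pile-randomisation for the meeting event) is fine.
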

\begin{proof}
From Proposition \ref{prop:mutationbound}, it suffices to show that the mutation time $T_n$ of Proposition \ref{prop:mutationslowtrans} is at most $O(n^4)$ with probability at least $1-\eps$.

A sufficient condition for a card to interact with each other card is for it to move to both the far-right and far-left positions. This takes $O(n^2)$ steps of a simple random walk, or $O(n^3)$ steps of Shuffle \ref{shu:1dwash1}, where any given card is chosen with probability $\f1n$. Therefore each $T_i - T_{i-1}$ takes at most $O(n^3)$ steps, and so $T_n$ is at most $O(n^4)$.

Only rough analysis is given here because the mutation time of Proposition \ref{prop:mutationslowtrans} is far from optimal.
\end{proof}

The mutation time of Proposition \ref{prop:mutationslowtrans} had the advantage of relying on very simple factorisations of permutations from Lemma \ref{lem:expression1}, but it turns out that considering only one card at a time is rather wasteful.

\begin{Proposition}
\label{prop:mutationfasttrans}
As a deck of $n$ cards is shuffled via Shuffling Process \ref{shu:1dwash1}, let $T$ be the earliest time at which each pair of cards has interacted. Then $T$ is a mutation time.
\end{Proposition}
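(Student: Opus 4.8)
The plan is to follow the proof of Proposition~\ref{prop:mutationslowtrans}, but to replace the rigid factorisation coming from Lemma~\ref{lem:expression1} by a flexible one adapted to the path in hand. As there, the basic move is to pick a time at which two cards interact (Definition~\ref{def:interaction}) and to exchange their roles from that moment onwards; fairness of Shuffle~\ref{shu:1dwash1} guarantees that this does not change the probability of the path, and Remark~\ref{rem:ijswap} says that a single swap at an interaction of $i$ and $j$ carries a path ending at $\pi$ to one ending at $\pi\tp{i}{j}$. The first thing I would record is the cumulative effect of several such swaps: if we swap at interactions occurring at strictly increasing times, and the cards meeting at those interactions are $\tp{i_1}{j_1},\dots,\tp{i_k}{j_k}$ in that order, then the endpoint is modified by the product $\tp{i_1}{j_1}\cdots\tp{i_k}{j_k}$. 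Hence, exactly as in Proposition~\ref{prop:mutationslowtrans}, to build an injection into the paths ending at $\pi'$ it suffices to realise the relabelling $\pi'\pi^{-1}$ as such a product, where the factors are read off, in time order, from a strictly increasing sequence of interactions along the path.

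This reduces the proposition to a purely combinatorial statement. Since the path satisfies $T$, every pair of cards has interacted by time $t$, so the time-ordered list of interacting pairs contains each of the $\binom{n}{2}$ transpositions at least once. The key lemma I would isolate is therefore: \emph{if a finite sequence of transpositions contains every transposition of $S_n$ at least once, then every element of $S_n$ equals the product of some strictly increasing subsequence of the list.} Granting this applied to $\pi'\pi^{-1}$ produces the desired factorisation, and therefore a path ending at $\pi'$; note that this is the point foreshadowed in Section~\ref{sec:mutdifficulties}, where the naive iteration of Remark~\ref{rem:ijswap} failed because no single fixed factorisation works for every path. I would attempt the lemma by induction on $n$, peeling off the label $n$: the interactions not involving $n$ already realise every transposition of $S_{n-1}$, while the interactions $\tp{c}{n}$ serve to place $n$ correctly. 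The main obstacle is precisely here, because the hypothesis ``every transposition appears'' does not localise to a prefix or a suffix of the list — removing the interactions involving $n$, or cutting the list at a chosen occurrence of some $\tp{c}{n}$, may leave gaps — so the inductive hypothesis must be strengthened to keep track of which permutations a given initial segment of the list can produce, rather than merely asserting that a full list produces everything.

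Finally, as in Proposition~\ref{prop:mutationslowtrans}, the constructed map must be shown to be an injection in the sense of Definition~\ref{def:mutationtime}. I would make the choice of increasing subsequence canonical — say the lexicographically first one, or the one output by the deterministic greedy underlying the induction, which depends only on $\pi$, $\pi'$ and the interaction pattern of the path — so that the construction is reversible. The inverse is then exhibited by re-simulating the modified path and undoing the swaps one at a time. As was already stressed in the proof of Proposition~\ref{prop:mutationslowtrans}, care is needed because swapping two cards alters which cards interact at later times, so the relevant interaction times cannot be read directly from the image path and must be recovered step by step; verifying that the canonical selection makes this recovery unambiguous is the second place where the argument must be handled carefully.
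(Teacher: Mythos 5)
Your overall strategy matches the paper's: realise the relabelling $\pi'\pi^{-1}$ by swapping cards at their interactions, reduce the problem to a combinatorial statement about factorising an arbitrary permutation as a product of an increasing subsequence of the time-ordered list of interacting pairs, and make the selection canonical so that the map can be inverted. The combinatorial statement you isolate is essentially the paper's Lemma \ref{lem:expression2} (the paper passes from ``each pair has interacted at least once'' to ``each transposition occurs exactly once'' by keeping only the \emph{last} interaction of each pair). But you do not prove it: the induction on $n$ that peels off the label $n$ is exactly the approach you yourself observe breaks down, and the ``strengthened inductive hypothesis'' is left unspecified. Since this lemma is the entire content separating Proposition \ref{prop:mutationfasttrans} from Proposition \ref{prop:mutationslowtrans}, this is a genuine gap. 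The paper's proof avoids induction on $n$ altogether: process the transpositions $s_1,\dots,s_{\binom{n}{2}}$ in order and multiply by $s_k$ exactly when doing so decreases the Cayley distance (Lemma \ref{lem:transpositiongen}) of the current partial product to the identity. The partial products then trace out part of a geodesic, so once two labels lie in different cycles they remain in different cycles forever; hence if the final product had a nontrivial cycle containing $p$ and $q$, the transposition $(p\;q)$ would have been applied when it appeared in the list, a contradiction.

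Two smaller points. First, you assert that swapping at interactions occurring at times $t_1<\cdots<t_k$ modifies the endpoint by $(i_1\;j_1)\cdots(i_k\;j_k)$ read in time order; but each swap conjugates the labels appearing in all later interactions, and the paper is explicit that the transpositions must be read off the original path in \emph{reverse} time order so that the multiplication lands on the correct side. Second, for injectivity, a ``lexicographically first'' increasing subsequence would not obviously be recoverable from the image path; the paper's inverse works precisely because the greedy length-reducing choice can be re-evaluated while scanning the modified path backwards from its end, deciding one interaction at a time whether it is the final meeting of a pair whose swap shortens $\pi'\pi^{-1}$.
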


The proof of this result will need the following lemmas. Lemma \ref{lem:transpositiongen} is standard, see for instance \cite{diaconis1977spearman} and \cite{diaconis2004poisson} for the respective parts.

\begin{Lemma}
\label{lem:transpositiongen}
Consider the symmetric group $S_n$ generated by the set of all permutations $(i \; j)$ and let $l$ be the corresponding length function ($l$ is often called the \defn{Cayley distance}). Then 
\begin{enumerate} 
\item The length of a permutation $\pi$ is $$l(\pi) = n - \#(\text{cycles in }\pi).$$
\item If $\pi$ is a permutation and $(i \; j)$ is a transposition, then $l((i \; j)\pi) = l(\pi) - 1$ if and only if $i$ and $j$ are in the same cycle in $\pi$. Otherwise $l((i \; j)\pi) = l(\pi) + 1$. The same is true when $\pi$ is multiplied on the right by $(i \; j)$, rather than on the left.
\end{enumerate} 
\end{Lemma}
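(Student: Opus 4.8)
The plan is to prove everything from a single ``cut-and-join'' computation describing how multiplying by a transposition changes the disjoint-cycle structure of a permutation. Concretely, I would first establish that left-multiplying $\pi$ by $(i\;j)$ changes the number of cycles by exactly $\pm 1$, with the sign governed by whether $i$ and $j$ share a cycle, then derive the length formula of part (1) from this, and finally read off part (2) by combining the two. Throughout, cycles are counted with fixed points regarded as $1$-cycles, and $c(\pi)$ denotes the number of cycles of $\pi$.

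For the crux, examine the effect of $(i\;j)$ on the cycle structure of $\pi$, using the convention $(\sigma\tau)(k)=\sigma(\tau(k))$. Suppose first that $i$ and $j$ lie in a common cycle, written $(i\; x_1\; \cdots\; x_p\; j\; y_1\; \cdots\; y_q)$. Tracking the image of each element under $(i\;j)\pi$ shows that this single cycle breaks into the two disjoint cycles $(i\; x_1\; \cdots\; x_p)$ and $(j\; y_1\; \cdots\; y_q)$, while every other cycle of $\pi$ is left intact; hence $c((i\;j)\pi)=c(\pi)+1$. If instead $i$ and $j$ lie in distinct cycles $(i\; x_1\; \cdots\; x_p)$ and $(j\; y_1\; \cdots\; y_q)$, the analogous verification shows these fuse into the single cycle $(i\; x_1\; \cdots\; x_p\; j\; y_1\; \cdots\; y_q)$, so $c((i\;j)\pi)=c(\pi)-1$. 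In both cases all other cycles and fixed points are unchanged.

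With this in hand, part (1) follows. For the upper bound $l(\pi)\le n-c(\pi)$, decompose $\pi$ into disjoint cycles and write each $k$-cycle as a product of $k-1$ transpositions, for instance $(a_1\; \cdots\; a_k)=(a_1\; a_2)(a_2\; a_3)\cdots(a_{k-1}\; a_k)$; summing over the cycles of $\pi$ uses $\sum(k_i-1)=n-c(\pi)$ transpositions. For the matching lower bound, note that the identity has $n$ cycles and $\pi$ has $c(\pi)$ cycles, while by the cut-and-join computation each transposition in any expression for $\pi$ alters the cycle count by exactly one; hence no fewer than $n-c(\pi)$ transpositions can suffice. This gives $l(\pi)=n-c(\pi)$. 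Part (2) is then immediate: by part (1), $l((i\;j)\pi)=n-c((i\;j)\pi)$, and the computation above gives $c((i\;j)\pi)=c(\pi)+1$ when $i$ and $j$ share a cycle and $c(\pi)-1$ otherwise, yielding $l((i\;j)\pi)=l(\pi)-1$ and $l(\pi)+1$ respectively. The right-multiplication statement follows by the same argument applied to $\pi(i\;j)$, or by noting that $l$ is invariant under inversion, that $i$ and $j$ share a cycle of $\pi$ exactly when they share a cycle of $\pi^{-1}$, and that $\pi(i\;j)=\bigl((i\;j)\pi^{-1}\bigr)^{-1}$, which reduces the right case to the left.

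The only genuine work is the cut-and-join computation of the second paragraph; once the cycle notation is set up, it is a routine check, but it is the single step on which all four assertions rest, and it must be carried out with care about exactly where $i$ and $j$ sit within the cycle or cycles in question.
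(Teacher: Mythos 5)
Your proof is correct and rests on the same cut-and-join computation that the paper's own proof uses; you simply make explicit the derivation of part (1) (upper bound by factoring each cycle into transpositions, lower bound by the $\pm 1$ change in cycle count) which the paper leaves implicit in ``both parts follow from noting.'' The only cosmetic difference is that you reduce the right-multiplication case to the left via inversion, where the paper uses conjugation by $(i\;j)$; both are valid.
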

\begin{proof}
Both parts follow from noting that for any two disjoint cycles $C_1 = (i \; a_1 \; a_2 \; \dots \; a_p)$ and $C_2 = (j \; b_1 \; b_2 \; \dots \; b_q)$, $$(i \; j)C_1C_2 = (j \; b_1 \; \dots \; b_q \; i \; a_1 \; \dots \; a_p).$$ Multiplying on the left by $(i \; j)$, it is also true that $$C_1C_2 = (i \; j)(j \; b_1 \; \dots \; b_q \; i \; a_1 \; \dots \; a_p).$$ The first of these equations describes multiplication by $(i \; j)$ when $i$ and $j$ are in different cycles, and the second when they are in the same cycle. 

Changing from left-multiplication to right- is conjugation by $(i \; j)$, which does not change whether or not $i$ and $j$ are in the same cycle.
\end{proof}

\begin{Lemma}
\label{lem:expression2}
Let $s_1$ through $s_{\binom{n}{2}}$ be transpositions in $S_n$, with each different transposition occurring exactly once. Then any permutation $\pi$ may be expressed as the product of some subsequence of the $(s_i)$ (in their original order).

That is, for any permutation $\pi$ there is a sequence $\eps_1$ through $\eps_{\binom{n}{2}}$ of $0$s and $1$s so that $\pi$ may be written as \begin{equation}\label{eq:expression2}\pi = \prod_{i=1}^{\binom{n}{2}}s_i^{\eps_i}.\end{equation}
\end{Lemma}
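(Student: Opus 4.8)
The plan is to build $\pi$ greedily, scanning the transpositions $s_1,\dots,s_{\binom n2}$ once in order and, at each step, deciding whether including $s_k$ moves the partial product closer to $\pi$ in the Cayley distance of Lemma \ref{lem:transpositiongen}. Concretely, I would track the ``remaining'' permutation $R_k = P_k^{-1}\pi$, where $P_k = \prod_{i\le k} s_i^{\eps_i}$ is the partial product, so that $R_0 = \pi$ and the goal $P_{\binom n2} = \pi$ becomes $R_{\binom n2} = e$. Since each $s_k$ is an involution, choosing $\eps_k = 1$ replaces $R_{k-1}$ by $s_k R_{k-1}$, while $\eps_k = 0$ leaves it unchanged. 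I would then adopt the greedy rule: set $\eps_k = 1$ exactly when $s_k = (i \; j)$ with $i$ and $j$ lying in the same cycle of $R_{k-1}$. By the second part of Lemma \ref{lem:transpositiongen}, this is precisely the condition under which left-multiplication by $s_k$ decreases $l(R_{k-1})$ by one; otherwise it would increase it.

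The heart of the argument is to show that this greedy scan terminates with $R_{\binom n2} = e$, that is, that we never run out of usable transpositions while $R$ is still nontrivial. First I would record the structural fact extracted from the cycle identity in Lemma \ref{lem:transpositiongen}: applying a same-cycle transposition $(i \; j)$ splits the cycle through $i$ and $j$ into two cycles, with $i$ and $j$ landing in different ones. Consequently every move we make only splits cycles, never merges them, so the partition of $\{1,\dots,n\}$ into cycles of $R_k$ refines as $k$ increases. The key monotonicity is then: once two labels lie in different cycles of some $R_k$, they lie in different cycles of every later $R_{k'}$, because re-merging them would require a move joining distinct cycles, which the greedy rule never performs.

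With this invariant in hand, termination follows by contradiction. Suppose $R_{\binom n2} \ne e$; then some cycle of length at least two contains two labels $i$ and $j$. By the monotonicity just established, $i$ and $j$ must have been in the same cycle of $R_k$ for every $k$, in particular at the step $k$ where the unique occurrence $s_k = (i \; j)$ was scanned. But then the greedy rule would have set $\eps_k = 1$, splitting $i$ and $j$ into different cycles from that point onward, contradicting their being together at the end. Hence $R_{\binom n2} = e$ and $P_{\binom n2} = \pi$, with each transposition used at most once and in the prescribed order, giving the sequence $\eps_1,\dots,\eps_{\binom n2}$ required by \eqref{eq:expression2}.

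I expect the main obstacle to be the monotonicity and termination step rather than the setup: the subtlety is that a split caused by some other transposition $(a \; b)$ can itself separate a pair $i,j$, so ``being in the same cycle'' is not preserved step to step. What saves the argument is that separations are permanent under the greedy rule, so any pair still together at the end was together throughout, which is exactly what forces the contradiction.
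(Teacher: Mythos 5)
Your proof is correct and is essentially the same as the paper's: the quantity you call $R_k = P_k^{-1}\pi$ is exactly the paper's cumulative product $\prod_{i=k}^{1}s_i^{\eps_i}\pi$, and both arguments use the same greedy same-cycle rule, the same ``separations are permanent'' invariant, and the same final contradiction via the unique occurrence of $(i\;j)$. Your justification of the invariant (splits only, never merges) is a slightly more explicit phrasing of what the paper derives from the geodesic property, but it is the same idea.
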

\begin{proof}
Fix the sequence of $(s_i)$. Consider starting with the permutation $\pi$ and being offered in turn the chance to multiply on the left by $s_1$ through $s_{\binom{n}{2}}$. Noting that each $s_i$ is its own inverse, it suffices to show that from any $\pi$ it is possible to reach the identity. Framing the question in terms of a path from $\pi$ to the identity also means that the $s_i$ are now multiplied on the left. Essentially, taking the inverse of Equation \ref{eq:expression2} has reversed the order of factors on the right side.

To move from $\pi$ to the identity by multiplying by some subsequence of the $s_i$, use exactly those $s_i$ that reduce the length of the present permutation, where length $l$ is with respect to the generating set of all transpositions as in Lemma \ref{lem:transpositiongen}. 

That is, set $\eps_{k}$ to be $1$ exactly if $$l\left(s_{k}\prod_{i=k-1}^{1}s_i^{\eps_i}\pi\right) < l\left(\prod_{i=k-1}^{1}s_i^{\eps_i}\pi\right).$$

Notice that the cumulative products $\prod_{i=k}^{1}s_i^{\eps_i}\pi$ only ever decrease in length, forming (potentially part of) a geodesic from $\pi$ to the identity in the Cayley graph of $S_n$ with respect to this generating set. Thus if two elements are in different cycles in one of these products $\prod_{i=k}^{1}s_i^{\eps_i}\pi$, then they are in different cycles of all future products $\prod_{i=k+m}^{1}s_i^{\eps_i}\pi$.

For the sake of a contradiction, assume that this procedure does not end at the identity. If the final product $$\pi' = \prod_{i=\binom{n}{2}}^{1}s_i^{\eps_i}\pi$$ is not the identity, then $\pi'$ has a nontrivial cycle. Let $p$ and $q$ be in the same cycle in $\pi'$. But then $p$ and $q$ were in the same cycle in all previous products, meaning that when $s_i$ was the transposition $(p \; q)$, $\eps_i$ should have been chosen to be $1$, which would have separated $p$ and $q$. This is a contradiction, so the specified choice of the $(\eps_i)$ always produces the identity element.
\end{proof}
\begin{proof}[Proof of Proposition \ref{prop:mutationfasttrans}]
As in Proposition \ref{prop:mutationslowtrans}, it is required to, for each pair of permutations $\pi$ and $\pi'$, define a map from paths of length $t$ which end in $\pi$ and satisfy $T$ to paths of length $t$ which end in $\pi'$. A path ending in $\pi$ will be mutated into a path ending in $\pi'$ by applying the permutation $\pi'\pi^{-1}$, acting on labels rather than positions. This permutation will be applied by swapping cards when they interact, with the definition of the stopping time $T$ guaranteeing a certain arrangement of available swaps, and Lemma \ref{lem:expression2} showing that these swaps are enough to create any permutation $\pi'\pi^{-1}$.

If a path satisfies $T$, then each pair of cards has interacted. Let $s_i$ be the transposition of the two cards which were the $(\binom{n}{2} + 1 - i)$th pair to interact, considering only the last interaction between any pair of cards. If multiple interactions happen simultaneously, order them arbitrarily. Reversing the order ensures that multiplication is done on the correct side.

In order to apply the permutation $\pi'\pi^{-1}$ to the labels, write $\pi'\pi^{-1}$ as a product of the $s_i$ as in Lemma \ref{lem:expression2}, and for each $s_i$ appearing in this expression, swap the corresponding two cards at the first time at which they interact.

Applying $\pi'\pi^{-1}$ to the labels of $\pi$ gives $\pi'$, so this modified path ends at $\pi'$. All that remains is to show that the map is injective. This is less immediate than in the proof of Proposition \ref{prop:mutationslowtrans}, because the factorisation of Lemma \ref{lem:expression2} is not as explicit than that of Lemma \ref{lem:expression1}. However, the proof of Lemma \ref{lem:expression2} does describe how to choose the $s_i$, and this is enough to undo the map. Given a path ending at $\pi'$ produced in this way, consider each step in the path in reverse order. If a step is the last interaction between two cards such that swapping them would shorten $\pi'\pi^{-1}$ as in Lemma \ref{lem:expression2}, then swap them. 

One complication mostly avoided by starting at the end of the path is that exchanging two cards also changes which cards interact later in the path. Thus, one cannot write down the entire sequence of $(s_i)$ initially, but must proceed one step at a time, using the current intermediate value of the path to evaluate whether or not an interaction between two cards is the final one. (Starting instead from the beginning of the path would mean that each time a transposition was used, the remainder of the path would be conjugated by that transposition. In effect, this warns that one is multiplying terms in the wrong order).
\end{proof}

\begin{Corollary}
\label{cor:mutationfasttrans}
The mixing time $\tme$ of Shuffle \ref{shu:1dwash1} is at most $O(n^3\log(n))$.
\end{Corollary}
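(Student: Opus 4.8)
The plan is to combine Proposition \ref{prop:mutationbound} with the mutation time $T$ of Proposition \ref{prop:mutationfasttrans}. Since $T$, the first time at which all $\binom{n}{2}$ pairs have interacted, is a mutation time, it suffices to exhibit some $t = O(n^3\log n)$ with $\pr{T > t} < \eps$. So the entire task reduces to a hitting-time estimate for the underlying process, with no further combinatorics required.

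The key geometric observation is that a single card which visits both extreme positions interacts with every other card. Indeed, fix cards $i$ and $j$ and track the integer $\mathrm{pos}(i) - \mathrm{pos}(j)$. Because each step of Shuffle \ref{shu:1dwash1} moves only one card, and by at most one position, this quantity changes by at most $1$ per step. When $i$ sits at the leftmost position the difference is $\le 0$, and when $i$ sits at the rightmost position it is $\ge 0$; hence between these two times the difference must pass through $0$, at which moment $i$ and $j$ share a position and therefore interact. Consequently, if every card reaches both position $1$ and position $n$ within time $t$, then every pair has interacted and $T \le t$. (This is the same sufficient condition used in Corollary \ref{cor:mutationslowtrans}, but here each card need only sweep once, rather than restarting the clock $n$ times.)

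It then remains to bound the time for a fixed card to reach both extremes. When chosen --- which happens with probability $\f1n$ at each step --- a card performs a lazy nearest-neighbour step on $\{1, \dots, n\}$, so in the clock that advances only when this card is selected it is a lazy random walk on the path, whose expected time to visit both endpoints is $O(n^2)$. I would convert this to shuffle steps in two stages: first, a standard Markov-plus-restart argument bounds the tail of the two-endpoint hitting time, showing it exceeds $Cn^2\log(n/\eps)$ walk steps with probability at most $\eps/(2n)$; second, a Chernoff bound on the $\mathrm{Binomial}(M, \f1n)$ number of times the card is chosen guarantees that $M = O(n^3\log(n/\eps))$ shuffle steps supply at least $Cn^2\log(n/\eps)$ selections except on an event of probability at most $\eps/(2n)$. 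A union bound over the $n$ cards then gives $\pr{T > t} < \eps$ for $t = O(n^3\log n)$ with $\eps$ fixed, and Proposition \ref{prop:mutationbound} finishes the proof.

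The main obstacle is purely probabilistic bookkeeping in the last step: one must chain together the random-walk hitting-time tail and the concentration of the selection counts, and check that the union bound over the $n$ cards is exactly what produces the extra logarithmic factor, so that the bound reads $n^3\log n$ rather than $n^3$. None of the combinatorial difficulty of constructing the bijections recurs here, since that work is already packaged into Proposition \ref{prop:mutationfasttrans}.
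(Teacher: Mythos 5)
Your proposal is correct and follows essentially the same route as the paper: the same sufficient condition (every card visiting both extreme positions forces all pairs to interact), the same $O(n^2)$ walk-step / $O(n^3)$ shuffle-step hitting-time estimate, and the same mechanism for acquiring the $\log n$ factor when combining over the $n$ cards. The only cosmetic difference is that you carry out the tail bound and union bound by hand (via Markov-plus-restart and a Chernoff bound on selection counts), whereas the paper packages exactly this argument into Lemma \ref{lem:combininglog} and cites it.
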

The following lemma will be useful
\begin{Lemma}
\label{lem:combininglog}
If $(T_i)$ are a collection of $k$ stopping times, each with expectation at most $t_0$, then for any fixed $\eps > 0$, the time until there is a probability of at least $1-\eps$ that each $T_i$ is satisfied is at most $O(t_0\log(k))$.
\end{Lemma}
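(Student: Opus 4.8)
The plan is to upgrade the first-moment hypothesis $\expect{T_i}\le t_0$ on each stopping time into a geometrically decaying tail bound, and then to combine the $k$ stopping times by a union bound. First I would apply Markov's inequality to get $\pr{T_i > 2t_0}\le \frac{\expect{T_i}}{2t_0}\le \frac12$. The important additional step is to iterate this. I would read the hypothesis as saying that the expected remaining time until $T_i$ is satisfied is at most $t_0$ \emph{from any state}, not merely from the initial state; in the applications the $T_i$ are interaction (hitting) times, for which a uniform bound of this kind is exactly what one establishes. Granting this, the (strong) Markov property lets me restart the estimate at time $2t_0$: conditioned on $T_i > 2t_0$, the expected further time is again at most $t_0$, so $\prcond{T_i > 4t_0}{T_i > 2t_0}\le \frac12$, and iterating gives $\pr{T_i > 2mt_0}\le 2^{-m}$ for every positive integer $m$.

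With the tail bound in hand the rest is routine. I would choose $m = \lceil \log_2(k/\eps)\rceil$, so that $2^{-m}\le \eps/k$ and hence $\pr{T_i > 2mt_0}\le \eps/k$ for each $i$. A union bound over the $k$ stopping times then gives $\pr{\exists\, i : T_i > 2mt_0}\le k\cdot\frac{\eps}{k} = \eps$, so with probability at least $1-\eps$ every $T_i$ is satisfied by time $2mt_0$. Since $\eps$ is fixed, $2mt_0 = 2t_0\lceil \log_2(k/\eps)\rceil = O(t_0\log k)$ as $k\to\infty$, which is the claimed bound.

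The only real content is the geometric tail, and the step I expect to be the main obstacle is the restart: Markov applied once to $\expect{T_i}\le t_0$ controls the tail only out to a constant multiple of $t_0$, and boosting it to an exponentially small tail requires the expectation bound to persist from every intermediate state rather than from the start alone. I would resolve this by phrasing the hypothesis as a uniform bound on the expected remaining time (equivalently, that $t_0$ bounds $\expect{T_i \mid \text{started at }x}$ for all reachable $x$), which is automatic for the hitting-type stopping times used in the corollaries. Some such uniformity is genuinely needed: a single heavy-tailed $T_i$ (equal to $0$ with probability $1-p$ and to $t_0/p$ with probability $p$) has mean $t_0$ yet, once $k$ independent copies are combined, forces a waiting time growing like $t_0 k/\eps$ rather than $t_0\log k$.
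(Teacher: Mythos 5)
Your proof is correct and follows essentially the same route as the paper's: iterate Markov's inequality to get a geometric tail $\pr{T_i > 2mt_0} \le 2^{-m}$ and then combine over the $k$ stopping times (the paper phrases the final step as bounding the expected number of as-yet-unsatisfied $T_i$, which is the same computation as your union bound). Your caveat about the restart step is well taken --- the paper's proof also silently iterates the Markov bound, which, as you note, requires the expected remaining time to be controlled from every intermediate state rather than only in expectation from the start; this uniformity holds for the hitting-type times in all of the paper's applications, and your counterexample shows it cannot be dispensed with.
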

\begin{proof}
After each $2t_0$ steps, each of the $T_i$ has at least a probability of $\f12$ of being satisfied, by Markov's inequality. Therefore the expected number of the $T_i$ which are as yet unsatisfied is multiplied by at most $\f12$ after each $2t_0$ steps. After $2t_0\log_2(\f{k}{\eps})$ steps, this expectation is below $\eps$, so the probability that any of the $T_i$ are unsatisfied is at most $\eps$. This completes the proof. 
\end{proof} 
\begin{proof}[Proof of Corollary \ref{cor:mutationfasttrans}]
As in the proof of Corollary \ref{cor:mutationslowtrans}, it takes $O(n^3)$ steps for any given card to meet each other card. Lemma \ref{lem:combininglog} gives that the time required for each card to have done this is $O(n^3\log(n))$.
\end{proof}

\section{More general shuffling processes}
\label{sec:mutgeneral}

The mutation times of the previous section may be applied to much more general procedures than Shuffle \ref{shu:1dwash1}. This section will give examples which modify Shuffle \ref{shu:1dwash1} to include features of actual wash shuffles --- that cards may move a long distance in a single step, and that the shuffle may take place in dimension greater than one. Section \ref{sec:mutother} will apply these techniques to some classical random walks on $S_n$, unrelated to wash shuffles.

\begin{Theorem}
\label{the:genmutationslowtrans}
Let $\M$ be a fair shuffling process, as in Definition \ref{def:fair2}. Define a stopping time as follows. Let $T_0 = 0$, and $T_i$ be the earliest time such that between times $T_{i-1}$ and $T_i$, for each $j$ between $1$ and $n$, $i$ has interacted with $j$. Then $T_n$ is a mutation time for $\M$.
\end{Theorem}

\begin{Theorem}
\label{the:genmutationfasttrans}
Let $\M$ be a fair shuffling process, and $T$ be the earliest time at which each pair $(i,j)$ has interacted. Then $T$ is a mutation time for $\M$.
\end{Theorem}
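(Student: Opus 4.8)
The plan is to follow the proof of Proposition \ref{prop:mutationfasttrans} essentially verbatim, replacing the specific features of Shuffle \ref{shu:1dwash1} with the general hypotheses: fairness in the sense of Definition \ref{def:fair2} in place of the ad hoc fairness of Shuffle \ref{shu:1dwash1}, and the general notion of interaction from Definition \ref{def:interaction} in place of two cards literally meeting at a position. The key observation is that the earlier proof never used anything about Shuffle \ref{shu:1dwash1} beyond two facts: that it is fair, so that swapping the roles of two interacting cards from their interaction onward yields an equally probable path (the property recorded after Definition \ref{def:interactiontime}); and the purely combinatorial statement of Lemma \ref{lem:expression2}, that once every pair has interacted the interaction times furnish enough swaps to realise any label permutation. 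Neither fact is special to the one-dimensional wash shuffle.

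First I would fix an initial state $x_0$ and permutations $\pi$ and $\pi'$, and build an injection from length-$t$ paths from $x_0$ that satisfy $T$ and end at $\pi$ to length-$t$ paths from $x_0$ ending at $\pi'$ (the latter need not satisfy $T$). As before, the idea is to apply the label permutation $\pi'\pi^{-1}$ to a given path by swapping cards at interaction times. Given a path satisfying $T$, for each pair $(i,j)$ I record the last time $i$ and $j$ interact in the sense of Definition \ref{def:interaction}, and let $s_1,\dots,s_{\binom n2}$ be the corresponding transpositions listed in reverse order of their last interaction, with ties broken arbitrarily; reversing the order ensures the product is formed on the correct side. By Lemma \ref{lem:expression2} one may write $\pi'\pi^{-1}=\prod_i s_i^{\eps_i}$ for suitable $\eps_i\in\{0,1\}$, and for each $i$ with $\eps_i=1$ I swap the two associated cards at the first time they interact. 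Fairness, via the swap property after Definition \ref{def:interactiontime}, guarantees each such modification preserves the probability of the path, so the image path has the same probability and, by construction of $\pi'\pi^{-1}$, ends at $\pi'$.

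The only genuinely delicate step is injectivity, and here I would reproduce the reverse construction from the proof of Proposition \ref{prop:mutationfasttrans}. The subtlety, identical to that earlier case, is that performing a swap alters which cards interact later in the path, so the sequence $(s_i)$ cannot be read off the image path in advance; instead one processes the image path from its last step backward, at each step using the partially-unwound path to decide whether a given interaction is the final one between its two cards and whether swapping there would shorten $\pi'\pi^{-1}$ under the geodesic-shortening procedure of Lemma \ref{lem:expression2}, and swaps if so. Because the unwinding starts at the end, each decision depends only on the portion of the path already restored, so the procedure is well-defined and inverts the map. I expect this bookkeeping — verifying that the reverse shortening procedure recovers exactly the swaps that were applied — to be the main obstacle, but it is already carried out in the proof of Proposition \ref{prop:mutationfasttrans} and requires no new idea, since fairness and Definition \ref{def:interaction} are the only properties of the process used there. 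The conclusion that $T$ is a mutation time then follows directly from Definition \ref{def:mutationtime}.
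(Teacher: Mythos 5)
Your proposal matches the paper's own proof, which simply states that the argument of Proposition \ref{prop:mutationfasttrans} carries over with ``swapping two cards'' replaced by ``acting by the transposition $(i \; j)$''; you have correctly identified that fairness and Lemma \ref{lem:expression2} are the only properties used, and your spelled-out version (including the backward unwinding for injectivity) is exactly the intended generalisation.
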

\begin{proof}
The proofs are the same as those of Propositions \ref{prop:mutationslowtrans} and \ref{prop:mutationfasttrans}, where any reference to swapping two cards $i$ and $j$ is replaced by `acting by the transposition $(i \; j)$'.
\end{proof}

Theorems \ref{the:genmutationslowtrans} and \ref{the:genmutationfasttrans} may be used to give bounds on the mixing times of a variety of shuffling processes. These bounds will not necessarily be tight, but they do occur as relatively swift applications of the techniques of that section. The goal of this section is to show how these theorems may be applied, rather than to produce optimal bounds.

The first example is a model for wash shuffles in one dimension.

\begin{Shufflep}
\label{shu:1dlongwash}
Take a deck of $n$ cards, and place each card at one of the positions $1$ through $n$. At each step, each card moves a random number of places to the right, distributed as independent geometric random variables with mean $\frac1p$, and then each card moves a random number of steps to the left, distributed in the same way. 

Movement happens position-by-position, with each card moving from position $1$ moving to position $2$ simultaneously, and then any of those cards moving more than one position, together with any cards moving from position $2$, moving to position $3$, and so on. When a pile of cards is moved into an occupied position, it is merged with the pile of cards already there by a GSR riffle shuffle. Cards that would be moved beyond position $1$ or $n$ move to that position and then stop.

After $t$ steps, piles are collected in order.
\end{Shufflep}

A step of this shuffling process should be understood as a hand sweeping from left to right, dragging each card some distance before releasing it, and then doing the same from right to left. The assumption that piles are combined by GSR riffle shuffles ensures that each pile of cards is jumbled, in the sense of Definition \ref{def:jumbled}, and it is unclear how reasonable an assumption this is for actual wash shuffles. 

\begin{Proposition}
After any number of steps of Shuffle \ref{shu:1dlongwash}, each pile of cards is jumbled. 
\end{Proposition}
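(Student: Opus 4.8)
The plan is to prove this by induction on the number of steps of Shuffle \ref{shu:1dlongwash}, showing that at every intermediate moment---not just at the end of a step but at each position during the position-by-position sweeping---every pile currently sitting at a single location is jumbled, and moreover the collection of piles is jumbled in the strong sense of Definition \ref{def:jumbled} (so the joint distribution is invariant under permuting labels within any pile, independently of the other piles). The base case is immediate: at time $0$ each card is at its own position, so every pile is a singleton and is trivially jumbled. For the inductive step I would analyse a single step of the shuffle, which decomposes into two sweeps (rightward then leftward), and each sweep is itself built out of elementary position-by-position merges. The key observation is that jumbledness is a property preserved by each such elementary operation.

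The core of the argument is a lemma about GSR riffle shuffles: if two disjoint sets of cards $A$ and $B$ are each jumbled (and jointly so, independently of the rest of the deck), then merging the corresponding two piles by a GSR riffle shuffle leaves $A \cup B$ jumbled. First I would recall that a GSR riffle shuffle of a combined pile of size $a+b$ chooses an interleaving of the two piles, preserving the relative order within each pile, with each interleaving pattern chosen with a probability depending only on the sizes $a$ and $b$. Since the relative orders within $A$ and within $B$ are each already uniform (by the inductive hypothesis that both piles are jumbled), and the riffle interleaving is chosen independently of the card labels, permuting the labels within $A\cup B$ carries any resulting order to an equally likely order: permuting labels inside $A$ alone is absorbed by the uniformity of $A$'s internal order, permuting inside $B$ likewise, and the relative interleaving positions of $A$-cards versus $B$-cards do not depend on which specific labels are involved. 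Crucially the riffle decision is made without reference to labels, which is exactly the fairness built into Shuffle \ref{shu:1dlongwash}, so this is independent of all other piles as required by Definition \ref{def:jumbled}.

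Granting this merge lemma, the inductive step follows by composing elementary operations. Movement happens position-by-position, and at each position the cards arriving (those dragged in from the left, or from the previous position) form a pile that was jumbled by the inductive hypothesis, and they are merged by a GSR shuffle with the jumbled pile already present; the merge lemma gives that the union is jumbled. Cards that split off to move further right carry a jumbled sub-pile forward---here I would note that any subset of a jumbled pile is itself jumbled, and that the decision of how many cards continue past a position is again made independently of labels, so splitting and recombining preserves jumbledness. Iterating across all positions for the rightward sweep, then repeating for the leftward sweep, shows that after a full step every resulting pile is jumbled, completing the induction.

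The main obstacle I anticipate is the bookkeeping in the merge lemma, specifically verifying the \emph{joint} independence clause of Definition \ref{def:jumbled} rather than merely the marginal uniformity within each pile. The statement demands that permuting labels within $A\cup B$ leaves the distribution unchanged \emph{independently of the positions of all other cards}, so I must be careful that the GSR riffle at one location is conditionally independent of the merges happening elsewhere given the multiset of cards at each location, and that the splitting decisions (how far each card travels) depend only on the geometric step-length variables and not on labels. Formalising ``the cards present at a given position form a jumbled pile'' as a statement about the conditional distribution given the unordered contents of each location, and checking that each elementary merge and split respects this conditional structure, is where the real work lies; the probabilistic content of each individual step is routine once the right inductive invariant is fixed.
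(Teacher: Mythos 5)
Your proposal is correct and takes essentially the same route as the paper: both argue by induction on the steps of the shuffle, with the key lemma that merging two jumbled piles by a GSR riffle shuffle yields a jumbled pile, justified by the fact that the interleaving is uniform over the $\binom{a+b}{a}$ possible subsets of positions. The paper compresses all of your position-by-position and pile-splitting bookkeeping into the single observation that the cards move independently of their labels, but the mathematical content is identical.
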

\begin{proof}
Because the cards move independently, it suffices by induction to show that when two jumbled piles of cards are combined by a riffle shuffle, the resulting pile is jumbled. When two piles of size $a$ and $b$ are combined using a riffle shuffle, the set of cards in the resulting pile of size $a+b$ which came from the pile of size $a$ is equally likely to be any of the $\binom{a+b}{a}$ subsets of size $a$. Therefore if the initial two piles were jumbled, then so is the combined pile.
\end{proof}

Another assumption of Shuffle \ref{shu:1dlongwash} is that each card is moved in the same way --- a more realistic assumption might be that cards near the top of their pile are moved further, or are the only cards moved.

Note that in Shuffle \ref{shu:1dlongwash}, $p$ is a probability, with $\frac{1}{p}$ controlling how far cards move at each step. For instance, if cards are swept on average $\sqrt{n}$ places, then $p = \frac{1}{\sqrt{n}}$. 

Shuffle \ref{shu:1dlongwash} was defined using GSR riffle shuffles to combine piles of cards. The following is an equivalent definition which aids in understanding the shuffling process, but does not appear as similar to a mash shuffle.

\begin{Remark}
\label{rem:washjumble}
Instead of moving cards position-by-position and combining piles with riffle shuffles, Shuffle \ref{shu:1dlongwash} could have moved cards one at a time, inserting them at a random position in the pile at their destination. This shuffling process is equivalent.
\end{Remark}
\begin{proof}
The original formulation of Shuffle \ref{shu:1dlongwash} has the property that the pile of cards at any position is jumbled. This procedure has the same property, and the same distribution of the positions of cards. Therefore they are equivalent.
\end{proof}

The following lemma will be needed for the analysis of Shuffle \ref{shu:1dlongwash}.

\begin{Lemma}
\label{lem:washinteraction}
In Shuffle \ref{shu:1dlongwash}, let the positions of cards $i$ and $j$ at time $t$ be $I_t$ and $J_t$. Let $I_{t+0.5}$ and $J_{t+0.5}$ be the positions that cards $i$ and $j$ occupy after they are moved to the right but before they have been moved to the left. If $I_t < J_t$ and $I_{t+0.5} \geq J_{t+0.5}$, then $i$ and $j$ interact at time $t+1$, in the sense of Definition \ref{def:interactiontime}.
\end{Lemma}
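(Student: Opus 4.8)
The plan is to show that the stated crossing of positions forces cards $i$ and $j$ into a common pile partway through the step, and then to use the riffle-shuffle jumbling together with the memorylessness of the geometric moves to interchange their roles measure-preservingly. Concretely, I want to produce a measure-preserving involution on the randomness driving the step from $X_t$ to $X_{t+1}$ that leaves every card except $i$ and $j$ untouched and sends the outcome $X_{t+1}$ to $X_{t+1}\cdot(i \; j)$. This would give $\M(X_t, X_{t+1}) = \M(X_t, X_{t+1}\cdot(i \; j))$, which is precisely the statement that $i$ and $j$ interact across the step from $t$ to $t+1$ (Definition \ref{def:interaction}), the form of interaction that licenses swapping their roles thereafter in the mutation-time arguments.

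First I would pin down the geometry. Writing $r_i$ and $r_j$ for the rightward distances drawn by the two cards, so that $I_{t+0.5} = \min(I_t + r_i, n)$ and $J_{t+0.5} = \min(J_t + r_j, n)$, the hypotheses $I_t < J_t$ and $I_{t+0.5} \geq J_{t+0.5}$ force $I_t + r_i \geq J_t$ (since $J_{t+0.5} \geq J_t$ because $r_j \geq 0$ and $J_t \leq n$, while $\min(I_t+r_i,n) \geq J_t$ with $J_t \leq n$ forces $I_t + r_i \geq J_t$). Thus card $i$ starts strictly left of $J_t$ and its rightward trajectory reaches at least $J_t$, so it passes through position $J_t$. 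Because the rightward sweep processes positions from left to right, card $j$ still occupies $J_t$ at the moment that position is processed --- it only moves once the sweep reaches $J_t$ --- while card $i$ has by then arrived from the left. Hence $i$ and $j$ lie in the same pile at position $J_t$ at that intermediate moment, where they are merged by a GSR riffle shuffle.

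The heart of the argument is to show that this merge makes the two cards interchangeable for the remainder of the step. Two ingredients combine. The riffle shuffle randomises the relative order of $i$ and $j$ inside the merged pile uniformly (the pile is jumbled in the sense of Definition \ref{def:jumbled}), so swapping their positions within that pile is measure-preserving. And by the memorylessness of the geometric distribution --- equivalently, realising each geometric move as a sequence of independent ``continue?'' decisions made position by position --- the remaining rightward distance of $i$, conditioned on its having reached $J_t$, is geometric with the same parameter as the fresh rightward distance of $j$, while their leftward distances are i.i.d.; so the post-merge futures of $i$ and $j$ are exchangeable. I would then define the involution to swap, from the merge onward, the within-pile order, the remaining rightward decisions, the leftward moves, and the riffle-participation of $i$ and $j$. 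Fairness of the shuffling process (Definition \ref{def:fair2}) guarantees that this relabelling commutes with the dynamics, so it alters the final configuration only by the transposition $(i \; j)$, leaving every other card's position and every within-pile order intact; the two ingredients above guarantee it is measure-preserving.

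The main obstacle is exactly this exchangeability step: being in a common pile is not by itself enough, since $i$ and $j$ carry different total intended displacements, so one must show that \emph{after} the merge their remaining motion is identically distributed. This is where the geometric (memoryless) nature of the moves is essential, and I expect the cleanest route is to build the randomness as per-position Bernoulli decisions, making ``remaining distance'' manifestly fresh and label-symmetric at the merge; the jumbling of the riffle shuffle then handles the ordering, and fairness packages the two into the required identity $\M(X_t, X_{t+1}) = \M(X_t, X_{t+1}\cdot(i \; j))$.
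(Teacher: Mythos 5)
Your argument is correct, but it is considerably more elaborate than the paper's, and the two rest on different presentations of the same underlying fact. The paper's proof never mentions the merge at position $J_t$, the left-to-right sweep order, or memorylessness: it simply observes that it suffices to compare the probabilities of the two intermediate configurations in which $i,j$ land at $(I_{t+0.5},J_{t+0.5})$ versus $(J_{t+0.5},I_{t+0.5})$ (within-pile order being irrelevant by Remark \ref{rem:washjumble}), and then checks the identity
$(1-p)^{I_{t+0.5}-I_t}p\,(1-p)^{J_{t+0.5}-J_t}p=(1-p)^{J_{t+0.5}-I_t}p\,(1-p)^{I_{t+0.5}-J_t}p$,
which holds because the product depends only on the total displacement; the hypotheses $I_t<J_t$ and $I_{t+0.5}\geq J_{t+0.5}$ are used only to ensure that the swapped displacements $J_{t+0.5}-I_t$ and $I_{t+0.5}-J_t$ are both nonnegative, i.e.\ that the swapped configuration is actually reachable. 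Your route instead tracks the dynamics of the sweep to locate a physical merge of the two cards' piles, and then builds a measure-preserving involution on the driving randomness using memorylessness of the geometric displacements plus the jumbling of the riffle merge. This is valid (your geometric deductions $I_t+r_i\geq J_t$ and the claim that $j$ has not yet left position $J_t$ when $i$ arrives both check out), and it buys a more conceptual explanation of \emph{why} the crossing forces an interaction, one that would survive replacing the geometric step distribution by anything memoryless; but it costs you the careful bookkeeping of ``remaining randomness'' that the paper's two-line computation avoids entirely, since memorylessness of the geometric is precisely the statement that $(1-p)^a p\,(1-p)^b p$ depends only on $a+b$. One small caution: both proofs gloss over the truncation at position $n$ (where the relevant probability is a geometric tail rather than a point mass), but the symmetry argument goes through in either formulation.
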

\begin{proof}
It suffices to check that the probability that the first half of step $(t+1)$ moves cards $i$ and $j$ to $I_{t+0.5}$ and $J_{t+0.5}$ is the same as the probability that it moves them to $J_{t+0.5}$ and $I_{t+0.5}$ instead, because cards move independently of one another. By Remark \ref{rem:washjumble}, it is not necessary to consider the order of cards within piles, just the position of each card. Let these probabilities be $P_\pi$ and $P_{\pi(i \; j)}$. Then \begin{align*}
P_\pi &= (1-p)^{I_{t+0.5}-I_t}p(1-p)^{J_{t+0.5}-J_t}p \\ &= (1-p)^{I_{t+0.5} + J_{t+0.5}-I_t-J_t}p^2 \\ &= (1-p)^{J_{t+0.5}-I_t}p(1-p)^{I_{t+0.5}-J_t}p \\ &= P_{\pi(i \; j)}.
\end{align*}

This completes the proof.
\end{proof}

In exactly the same way, two cards interact when one is moved past the other while they are moving to the left.

\begin{Proposition}
The mixing time of Shuffle \ref{shu:1dlongwash} is at most $O(p^2n^2\log(n))$.
\end{Proposition}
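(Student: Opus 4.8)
The plan is to combine Theorem \ref{the:genmutationfasttrans} with a random-walk estimate for how quickly a single card sweeps across the whole line, exactly paralleling the proof of Corollary \ref{cor:mutationfasttrans}. By Theorem \ref{the:genmutationfasttrans} the first time $T$ at which every pair of cards has interacted is a mutation time, so by Proposition \ref{prop:mutationbound} it suffices to show that $T \leq O(p^2 n^2 \log n)$ with probability at least $1 - \eps$.

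First I would reduce ``all pairs have interacted'' to ``every card has visited both endpoints''. I claim that if card $i$ ever occupies position $1$ and at some other time occupies position $n$, then it has interacted with every other card $j$. Indeed, write $D_t$ for the signed gap (position of $j$) minus (position of $i$). When $i$ is at the far left we have $D_t \geq 0$, and when $i$ is at the far right we have $D_t \leq 0$; if $D$ is ever both strictly positive and strictly negative then at some intervening half-step it changes sign, which is exactly the crossing hypothesis of Lemma \ref{lem:washinteraction} (or its leftward analogue noted immediately after that lemma), while if instead $D = 0$ at some time then the two cards share a position and hence interact. Either way $i$ and $j$ interact, so $T$ is dominated by the first time every card has reached both ends of the line.

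Next I would bound, for a fixed card, the expected time to reach both position $1$ and position $n$. The position of a single card performs a random walk on $\{1, \dots, n\}$ whose per-step increment (a geometric rightward move of mean $1/p$ followed by a geometric leftward move of the same mean) is mean-zero away from the boundary and has variance $\Theta(1/p^2)$, with the walls at $1$ and $n$ acting as reflecting barriers. For such a walk on an interval of width $n$ the expected time to traverse from one end to the other, and hence to visit both ends, is $O(n^2 / (1/p^2)) = O(p^2 n^2)$; the large geometric jumps only decrease hitting times, so this serves as a valid upper bound. I would then apply Lemma \ref{lem:combininglog} to the $n$ stopping times ``card $c$ has visited both endpoints'', each of expectation $O(p^2 n^2)$, to conclude that all of them, and therefore $T$, are satisfied within $O(p^2 n^2 \log n)$ steps with probability at least $1 - \eps$.

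The main obstacle is the traversal-time estimate of the third step: making it rigorous despite the reflecting boundaries and the unbounded geometric step sizes. I expect this is handled cleanly by an optional-stopping argument, since in the bulk of the interval the position is a martingale with increments of variance $\Theta(1/p^2)$, and the boundary reflection together with any overshoot beyond the target only helps; the constant factors suppressed here are irrelevant by the standing convention of the paper that we track only the order of the mixing time.
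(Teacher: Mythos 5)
Your proposal is correct and follows essentially the same route as the paper: Theorem \ref{the:genmutationfasttrans} plus Lemma \ref{lem:washinteraction}, a variance-$\Theta(1/p^2)$ random-walk estimate giving $O(p^2n^2)$ per event, and Lemma \ref{lem:combininglog}. The only difference is cosmetic: the paper tracks the signed gap between each of the $\binom{n}{2}$ pairs and waits for it to hit zero, whereas you track each of the $n$ cards' traversal of the whole interval (as in the proof of Corollary \ref{cor:mutationfasttrans}); both yield $O(p^2n^2)$ per stopping time and the same final bound.
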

\begin{proof}
Consider the mutation time $T$ given in Theorem \ref{the:genmutationfasttrans}. By Lemma \ref{lem:washinteraction}, two cards have interacted when one moves to the position of the other or overtakes it. Thus when two cards have interacted in this way, they may be swapped from there on without changing the probability of the path.

To bound the mutation time $T$, for any fixed $i$ and $j$, consider the (signed) distance $d_{ij}$ between cards $i$ and $j$. The distance is initially $j-i$. At each step, the expected change in this distance is zero, and the variance of the change is $2\f{1-p}{p^2}$. Thus at each step, $d_{ij}$ evolves similarly to $2\f{1-p}{p^2}$ steps of a simple random walk on the interval $[0,n]$, which requires $O(n^2)$ steps to hit $0$. Therefore the expected time for cards $i$ and $j$ to interact is at most $O(n^2\f{p^2}{2(1-p)})$. For small $p$, this is $O(n^2p^2)$. 

Lemma \ref{lem:combininglog} gives that the time for each pair of cards to interact is at most $O(p^2n^2\log(n))$.
\end{proof}

For instance, when $p = \frac{1}{\sqrt{n}}$, the mixing time of Shuffle \ref{shu:1dlongwash} is at most $O(n\log(n))$. This bound depends heavily on $p$. If $p = \frac{2}{n}$, so cards are moved on average half of the length of the interval, then the mixing time of Shuffle \ref{shu:1dlongwash} is at most $O(\log(n))$.

It is possible to analyse analogues of Shuffle \ref{shu:1dwash1} in higher dimensions.

\begin{Shufflep}
\label{shu:ddwash1}
Let $G$ be a $d$--dimensional grid with each side length $n$. Take a deck of $n$ cards, and place each card at an arbitrary vertex of $G$. At each step, choose a random card. With probability $\frac{1}{2}$ leave it in place, otherwise move it to a uniformly chosen neighbouring vertex. If a card moves to an occupied position, then it is inserted at a random position into the pile of cards already there.

After $t$ steps of this process, the piles are combined by placing the piles atop one another, in lexicographic order.
\end{Shufflep}

\begin{Proposition}
The mixing time of Shuffle \ref{shu:ddwash1} is at most $O(n^{2d+1}\log(n))$.
\end{Proposition}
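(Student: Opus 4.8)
The plan is to follow the template of Corollary~\ref{cor:mutationfasttrans}, using the mutation time supplied by Theorem~\ref{the:genmutationfasttrans}. First I would check that Shuffle~\ref{shu:ddwash1} fits the framework: it is fair, since the rule for moving cards never refers to their labels, and, exactly as for Shuffle~\ref{shu:1dwash1}, two cards interact in the sense of Definition~\ref{def:interaction} precisely when they come to occupy the same vertex of $G$ --- a card entering an occupied vertex is inserted at a uniformly random height in that pile, so swapping the two cards in question preserves the transition probability. Thus, letting $T$ be the first time every pair of cards has shared a vertex, Theorem~\ref{the:genmutationfasttrans} says $T$ is a mutation time, and by Proposition~\ref{prop:mutationbound} it suffices to find $t=O(n^{2d+1}\log n)$ with $\pr{T>t}<\eps$. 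By Lemma~\ref{lem:combininglog}, applied to the $\binom{n}{2}$ stopping times ``cards $i$ and $j$ have met'', this reduces to bounding the expected meeting time of a single fixed pair by $O(n^{2d+1})$, since $\log\binom{n}{2}=O(\log n)$.

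The core of the argument is therefore this meeting-time estimate. Fix two cards and track the pair of their positions $(X_s,Y_s)$; this is a lazy random walk on the product graph $G\times G$, a $2d$--dimensional grid with $n^{2d}$ vertices, in which at each step $X$ moves to a uniform neighbour with probability $\f{1}{2n}$, $Y$ does so with probability $\f{1}{2n}$, and otherwise nothing changes (the other cards are irrelevant to this pair). The two cards meet exactly when this product walk hits the diagonal $\Delta=\{(v,v)\}$, and since $\Delta$ contains any chosen single vertex $w$, the meeting time is at most the maximum hitting time of $w$. I would bound this hitting time by the commute time via effective resistance, $\mathbb{E}_u[\tau_w]\le 2m\,R_{\mathrm{eff}}(u,w)$: the product grid has $m=\Theta(n^{2d})$ edges, and in dimension $2d\ge 3$ (that is, $d\ge 2$) its effective resistances are uniformly bounded, so the hitting time of a point is $\Theta(n^{2d})$ steps of the non-lazy walk. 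Undoing the laziness --- the product walk changes only on the $\f1n$ fraction of steps on which one of the two cards is selected and moves --- costs a factor of $n$, giving an expected meeting time of $O(n^{2d+1})$ steps of Shuffle~\ref{shu:ddwash1}, as required.

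The main obstacle is this hitting-time bound on $G\times G$, and in particular controlling the effective resistance: the estimate $R_{\mathrm{eff}}=O(1)$ relies on transience-type behaviour in dimension at least three, which is why the clean bound emerges for $d\ge2$, while $d=1$ must either be treated as the already-analysed line (Shuffle~\ref{shu:1dwash1}) or absorb an extra logarithmic factor from the recurrence of the planar product walk. Two further points need care but are routine: the walks are reflected at the boundary of $G$ rather than living on a torus, so the difference $X-Y$ is not itself a clean random walk and it is cleaner to argue on the product graph directly as above; and the bookkeeping of the laziness factor $n$ must be done honestly, since it is exactly this factor that upgrades the vertex count $n^{2d}$ to the claimed $n^{2d+1}$. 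In keeping with the stated aim of the section, no attempt is made at tightness, and the final $\log n$ is an artefact of the union over pairs in Lemma~\ref{lem:combininglog}.
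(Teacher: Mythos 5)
Your proposal is correct and reaches the stated bound, but the core hitting-time estimate is argued differently from the paper. The paper tracks the \emph{difference} of the two cards' positions, which (up to reflections at the boundary, dismissed by a monotonicity remark) performs a simple random walk on a $d$--dimensional grid of side length $2n$, and cites Lawler for an $O(n^{2d})$ bound on the expected time for this walk to hit the origin; it then multiplies by $n$ for the laziness and applies Lemma \ref{lem:combininglog}, exactly as you do. You instead work on the product graph $G\times G$, bound the time to hit the diagonal by the time to hit a single diagonal vertex, and invoke the commute-time identity together with the $O(1)$ effective resistance of a grid in dimension $2d\ge 3$. The two routes happen to produce the same intermediate figure $O(n^{2d})$, though for different reasons: your bound is essentially the right order for hitting a point in the $2d$--dimensional product grid, while the paper's $O(n^{2d})$ is a loose bound for the $d$--dimensional difference walk. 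Your version is more self-contained (no external hitting-time citation) and avoids the awkwardness that the reflected difference process is not exactly a random walk, but it pays for this by bifurcating at $d=1$, where the planar product walk is recurrent and the resistance bound fails; your fallback to the already-analysed Shuffle \ref{shu:1dwash1} (Corollary \ref{cor:mutationfasttrans}) closes that case, whereas the paper's difference-walk bound covers all $d\ge 1$ uniformly. The remaining boundary and degree-weighting technicalities you flag are treated at the same (informal) level of rigour as in the paper, so nothing essential is missing.
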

\begin{proof}
For any two cards $i$ and $j$, each time either card is chosen the difference between their positions takes one step of a simple random walk on a $d$--dimensional square grid of side length $2n$ (The difference will sometimes be unable to move near the outside of this grid, when one of the cards in question hits an edge, but this only decreases the time to hit zero). From \cite{LAWLER198685}, the expected number of these steps until the difference becomes zero is at most $O(n^{2d})$. This takes time $O(n^{2d+1})$, because there are $n$ different cards which could be moved at any step. There are $\binom{n}{2}$ pairs $(i,j)$, so Lemma \ref{lem:combininglog} completes the proof.
\end{proof}

\section{Applications to classical shuffles}
\label{sec:mutother}

The mutation times defined in the previous section may also be used to obtain bounds on several classical random walks on $S_n$. 

\begin{Proposition}
\label{prop:adjtransposition}
The mixing time of the lazy adjacent transposition walk is at most $O(n^3\log(n))$ (at each step swap two adjacent cards with probability $\frac{1}{2}$, else do nothing).
\end{Proposition}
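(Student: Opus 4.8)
The plan is to realise the lazy adjacent transposition walk as a fair shuffling process and to apply Theorem \ref{the:genmutationfasttrans}, exactly as in the wash-shuffle examples. First I would identify when two cards interact: cards $i$ and $j$ should interact precisely when they occupy adjacent positions and that adjacent pair is the one selected at this step, since at such a step the lazy coin places the two cards in a uniformly random order. The cleanest way to see that this meets Definition \ref{def:interaction} is to pass to an equivalent formulation in the spirit of Remark \ref{rem:washjumble}: when a pair is selected, first merge its two cards into a single shared slot and then split them back into the two positions in uniformly random order. In the intermediate shared-slot state the two cards occupy a common position, so that state is fixed by $(i \; j)$ and the cards interact in the sense of the first part of Definition \ref{def:interaction}. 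Equivalently, working with the underlying sequence of choices rather than with the induced sequence of permutations, toggling the coin of the selected pair is probability-preserving and implements the required exchange of roles; it is this bookkeeping, rather than the identity $\M(x,y)=\M(x,y\cdot(i \; j))$ read off the lumped chain on $S_n$, that I would invoke.

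Granting this, Theorem \ref{the:genmutationfasttrans} shows that the first time $T$ at which every pair of cards has interacted is a mutation time, and by Proposition \ref{prop:mutationbound} it suffices to show that $T$ is $O(n^3\log n)$ with probability at least $1-\eps$. I would bound $T$ with Lemma \ref{lem:combininglog} applied to the $\binom{n}{2}$ stopping times ``cards $i$ and $j$ have interacted'', so the task reduces to showing that a fixed pair interacts in expected time $O(n^3)$.

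For the single-pair estimate I would track the relative position of $i$ and $j$, for instance the number of cards lying between them. The order of $i$ and $j$ changes only when they are adjacent and their pair is selected and swapped, which is exactly an interaction; away from adjacency, this quantity changes only when a selected adjacent pair straddles the position of $i$ or of $j$. There are only $O(1)$ such boundary pairs, each selected with probability $\Theta(1/n)$ per step, so after a time-change of order $n$ the relative position performs a simple random walk on an interval of length $O(n)$. A standard hitting-time estimate (of the kind used for Shuffles \ref{shu:1dlongwash} and \ref{shu:ddwash1}) then gives that this walk reaches adjacency and crosses in $O(n^2)$ of its own steps, that is $O(n^3)$ steps of the shuffle, in expectation. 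Feeding $t_0=O(n^3)$ and $k=\binom{n}{2}$ into Lemma \ref{lem:combininglog} yields $T=O(n^3\log n)$ with the required probability.

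The step I expect to be the main obstacle is the first one: making the interaction notion precise despite the laziness. On the lumped chain on $S_n$ one has $\M(x,x)=\tfrac12$ while the single swap $x\cdot(i \; j)$ has probability only $\tfrac{1}{2(n-1)}$, so Definition \ref{def:interaction} cannot be checked naively; the shared-slot reformulation, or equivalently the argument at the level of choice sequences, is what makes the symmetry legitimate and lets Theorem \ref{the:genmutationfasttrans} apply. The hitting-time estimate, by contrast, is routine and affects only the constants, which we are not tracking.
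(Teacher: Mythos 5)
Your proposal follows essentially the same route as the paper: wait for every pair of cards to interact, bound the expected interaction time of a fixed pair by $O(n^3)$ via a hitting-time estimate for the random walk performed by their relative positions, and combine with Lemma \ref{lem:combininglog} and Theorem \ref{the:genmutationfasttrans}. The paper's proof is terser and does not explicitly address the subtlety you raise about laziness and the literal reading of Definition \ref{def:interaction} --- it simply treats adjacency-plus-selection as an interaction --- so your shared-slot justification is a clarification of the same argument rather than a divergence from it.
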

\begin{proof}
Consider two cards $i$ and $j$. The difference between their positions performs a simple random walk on $[0,n-1]$, taking a step whenever either of them is moved (this happens with probability approximately $\f2n$ and at least $\f1n$, depending on whether either of the cards are at the extreme positions $1$ or $n$. The expected number of steps for this walk to hit $0$ is $O(n^2)$. Therefore, the expected time until cards $i$ and $j$ interact is $O(n^3)$. Lemma \ref{lem:combininglog} gives that the time until each pair of cards have interacted is $O(n^3\log(n))$, and Theorem \ref{the:genmutationfasttrans} completes the proof.
\end{proof}

The bound of Proposition \ref{prop:adjtransposition} is of the correct order. The upper bound is obtained using comparison theory in Section 4, Example 2 of \cite{Compgroups}, or using coupling as Example 4.10 of \cite{aldous1983}. The lower bound is derived using Wilson's method in \cite{wilson2004mixing}.

\begin{Proposition}
\label{prop:ncycletransposition}
Consider the random walk on $S_n$ generated by the identity, the $n$--cycle $(1 \; 2 \; \dots \; n)$, and the transposition $(1 \; 2)$. This walk has mixing time at most $O(n^3\log(n))$.
\end{Proposition}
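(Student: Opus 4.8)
The plan is to apply Theorem \ref{the:genmutationfasttrans}. Since the walk is a random walk on $S_n$ it is fair, so the first time $T$ at which every pair of cards has interacted is a mutation time, and by Proposition \ref{prop:mutationbound} it suffices to produce a $t = O(n^3\log n)$ with $P(T > t) < \eps$. Applying Lemma \ref{lem:combininglog} to the $\binom{n}{2}$ stopping times ``cards $i$ and $j$ have interacted'' reduces everything to a single estimate: the expected time for one fixed pair $i,j$ to interact is $O(n^3)$. The factor $\log\binom{n}{2} = O(\log n)$ supplied by the lemma then accounts for the logarithm.

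First I would identify interaction concretely. Reading a step as left multiplication, the $n$--cycle rotates all cards by one position around the cycle $\mathbb{Z}/n$, and the transposition swaps the two cards currently in positions $1$ and $2$ (the ``gate''). Unwinding Definition \ref{def:interaction} for the generating measure (equal mass on the identity, the cycle, and the transposition), cards $i$ and $j$ interact at a step exactly when they simultaneously occupy positions $1$ and $2$ and the step taken is the identity or the transposition: the equal weighting of the identity and the transposition is precisely what makes $\M(x,y)$ and $\M(x,y(i\;j))$ agree at the gate, while the cycle never yields a match. Thus, up to a constant factor in time, interaction of the pair is the event that $i$ and $j$ are simultaneously at the gate.

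The core of the argument is then a one--dimensional hitting--time estimate, in the spirit of the proof of Proposition \ref{prop:adjtransposition}. The pair of positions $(P_i,P_j)$ is itself a Markov chain, since the effect of each generator on $i$ and $j$ depends only on whether one of them sits at the gate, not on the other cards. The cyclic distance $d = P_j - P_i \bmod n$ is unchanged by the cycle and changes by $\pm 1$ only when the transposition acts while exactly one of $i,j$ is at the gate; such an opportunity arises with probability $\Theta(1/n)$ per step, since a given card is at the gate a $\Theta(1/n)$ fraction of the time. Hence $d$ behaves as a symmetric random walk on $\{1,\dots,n-1\}$ whose clock ticks once every $\Theta(n)$ steps, and interaction requires reaching an adjacent configuration $d\in\{1,n-1\}$ and then rotating the pair into the gate. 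A symmetric walk on an interval of length $n$ reaches an endpoint in $O(n^2)$ ticks, i.e. $O(n^3)$ steps, after which the pair funnels into the gate within a further $O(n)$ steps; so the expected interaction time of a pair is $O(n^3)$, as required.

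The main obstacle is making this effective random walk rigorous: establishing that the increments of $d$ are genuinely unbiased, which I would extract from the symmetry $i\leftrightarrow j$ under which $d\mapsto n-d$ and the dynamics are preserved, and that the clock rate is $\Theta(1/n)$; and handling the fact that, unlike the adjacent transposition walk, swaps can only occur at the single gate, so the pair must both be brought adjacent \emph{and} be rotated to a fixed location before interacting. A mild drift of size $O(1/n)$ per tick would in fact be harmless, since over an interval of length $n$ it contributes only an $O(1)$ factor to the hitting time, so only crude control of the bias is actually needed.
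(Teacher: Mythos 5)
Your proposal is correct and follows essentially the same route as the paper: identify interaction with the pair simultaneously occupying the gate positions $1,2$, observe that the cyclic distance between the two cards performs an effectively unbiased walk whose clock ticks at rate $\Theta(1/n)$, deduce an $O(n^3)$ expected interaction time per pair, and finish with Lemma \ref{lem:combininglog} and Theorem \ref{the:genmutationfasttrans}. Your version is if anything slightly more careful than the paper's (which argues heuristically via cards reaching the top of the deck every $\approx 3n$ steps), but the decomposition and the key estimates are the same.
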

\begin{proof}
Consider the (cyclic) distance between two cards $i$ and $j$. After each $3n$ steps, approximately, each card has been moved to the top of the deck once. Each time a card moves to the top of the deck, there is a chance of at least $\f13$ that it will take at least one step in either direction that is not immediately undone. Thus, the distance between $i$ and $j$ mixes at least as fast as a simple random walk on a cycle of length $n$ which takes a step every $9n$ steps of the original walk. That simple random walk needs $O(n^2)$ steps to hit zero, so the expected time for $i$ and $j$ to interact is at most $O(n^3)$. Lemma \ref{lem:combininglog} and Theorem \ref{the:genmutationfasttrans} complete the proof.
\end{proof}

Proposition \ref{prop:ncycletransposition} produces a mixing time of the correct order for this random walk. The upper bound is obtained from comparison theory in Section 3, Example 3 of \cite{Compgroups}, and the lower bound from Wilson's method in \cite{wilson2004mixing}.

\begin{Proposition}
\label{prop:mutrtr}
The mixing time of the random-to-random shuffle (at each step, a random card is removed and then inserted at a random position) is at most $O(n^2\log(n))$.
\end{Proposition}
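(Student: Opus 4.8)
The plan is to follow the template of the preceding propositions and apply Theorem \ref{the:genmutationfasttrans}: identify the event that makes two cards interact, bound the expected time for a fixed pair to interact by $O(n^2)$, and then combine over pairs with Lemma \ref{lem:combininglog} and convert to a mixing bound with Proposition \ref{prop:mutationbound}. First I would record that the random-to-random shuffle is a fair shuffling process in the sense of Definition \ref{def:fair2}: the card to be removed is selected by position and the reinsertion point is chosen uniformly, and neither choice refers to the labels on the cards, so a step commutes with any relabelling. This fairness is exactly what licenses the use of Theorem \ref{the:genmutationfasttrans}.

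Next I would fix two cards $i$ and $j$ and pin down when they interact. The useful feature here is that the reinsertion position is uniform and independent of the rest of the configuration. Consider a step that removes card $i$, and look at the two outcomes in which $i$ is reinserted immediately to the left, respectively immediately to the right, of card $j$; call the resulting arrangements $y$ and $y'$. Then $y' = y(i\;j)$, and because the two insertion slots are equally likely, $\M(x,y) = \M(x,y')$, so $i$ and $j$ interact between the current state $x$ and $y$ in the sense of Definition \ref{def:interaction}. The same holds when card $j$ is the one removed. To make the equality $\M(x,y)=\M(x,y')$ exact I would note that, provided $i$ and $j$ are not already adjacent in $x$, the only step producing either arrangement is the one that moves card $i$ into the relevant slot, so there are no competing contributions to either transition probability.

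With the interaction event identified, the per-step bound is routine: at each step card $i$ is the one moved with probability $\frac1n$, and conditioned on this it lands immediately adjacent to card $j$ with probability $\Theta(\frac1n)$ (two favourable slots among $n$), and symmetrically for card $j$. Hence each step causes $i$ and $j$ to interact with probability $\Omega(\frac{1}{n^2})$, and this lower bound holds uniformly in the current configuration, so the first interaction time of the pair is dominated by a geometric variable of mean $O(n^2)$. Applying Lemma \ref{lem:combininglog} to the $\binom{n}{2}$ pairwise interaction times, each of expectation $O(n^2)$, the time $T$ until every pair has interacted satisfies $\pr{T > t} < \eps$ for $t = O(n^2\log n)$; Theorem \ref{the:genmutationfasttrans} says $T$ is a mutation time, and Proposition \ref{prop:mutationbound} then yields $\tme \leq O(n^2\log n)$.

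I expect the main obstacle to be the bookkeeping that makes the interaction event genuinely yield the equality $\M(x,y)=\M(x,y(i\;j))$ in every case, in particular the boundary situations where card $j$ sits at an end of the deck or where $i$ and $j$ are already adjacent, since there a second move could in principle realise one of the two arrangements and break the symmetry. Restricting attention to steps where the moved card is carried into a fresh adjacency with the other, as above, sidesteps this while still giving the $\Omega(n^{-2})$ per-step probability, so the remaining work is just to confirm that this restricted event suffices and that its probability bound is uniform over states.
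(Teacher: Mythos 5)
Your proposal is correct and follows essentially the same route as the paper: it identifies the interaction as the moved card landing immediately above or below the other card (two equally likely insertion slots), bounds the pairwise interaction time by $O(n^2)$, and concludes via Lemma \ref{lem:combininglog}, Theorem \ref{the:genmutationfasttrans}, and Proposition \ref{prop:mutationbound}. The only difference is that you are somewhat more careful about the boundary and adjacency cases than the paper, which simply notes that the moved card interacts with the card on top of which it is placed and asserts the $O(n^2)$ bound.
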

\begin{proof}
At each step, the card which was moved interacts with the card on top of which it is placed, because it could instead have been placed one position lower. Other interactions occur, but for simplicity, only consider interactions of this form.

The expected time for cards $i$ and $j$ to interact is $O(n^2)$ steps. Lemma \ref{lem:combininglog} and Theorem \ref{the:genmutationfasttrans} complete the proof. 
\end{proof}

The bound of Proposition \ref{prop:mutrtr} is of the wrong order --- this mixing time is known to be $O(n\log(n))$. The total variation mixing of this walk is analysed in \cite{MErtr}.

\section{Further work}

When Theorem \ref{the:genmutationfasttrans} is used as in Proposition \ref{prop:mutrtr}, there may be room for improvement in Lemma \ref{lem:expression2}. The theorem says that waiting for each pair of cards to interact suffices, but it isn't clear whether it is possible to improve this criterion. For instance, this approach gave a bound of the wrong order for the random-to-random walk (Proposition \ref{prop:mutrtr}). It would be interesting to improve this bound by either identifying more opportunities for interactions in this walk, or reducing the required interactions. 

\begin{Question}
Given a sequence of random transpositions, if it is long enough that it contains each transposition at least once, then it has the property of Lemma \ref{lem:expression2}. Is it likely for significantly shorter sequences to have this property? The time until Lemma \ref{lem:expression2} is satisfied is the coupon collector time of $\frac12n^2\log(n)$, but most of this time is spent waiting for the last few transpositions. Are those necessary, or can the repeats of other transpositions take their place?
\end{Question}

Theorem \ref{the:genmutationfasttrans} shows that waiting for each pair of cards to interact suffices for a mutation time. It is possible that this is a special case of a more general result about groups and generating sets. 

\begin{Question}
Theorem \ref{the:genmutationfasttrans} pertains to the symmetric group as generated by transpositions (Definition \ref{def:interaction}). Are there generalisations of this theorem for other generating sets? For instance, perhaps a random walk might offer opportunities for three cards to interact at once --- how many of these interactions are required? What about a `shuffling process' which results in an element of a group other than $S_n$ with an analogue of (Definition \ref{def:interaction}) for a `nice' generating set for that group --- is there an analogue of Theorem \ref{the:genmutationfasttrans}?
\end{Question}

\bibliographystyle{plain}
\bibliography{bib}
\end{document}